\setlist[enumerate]{itemsep=0.2em, topsep = 0em}
\setlist[itemize]{itemsep=0.2em, topsep = 0em}
\newtheorem{theorem}{Theorem}[section]
\newtheorem{proposition}{Proposition}[section]
\newtheorem{corollary}{Corollary}[section]
\newtheorem{lemma}{Lemma}[section]
\newtheorem{remark}{Remark}[section]
\newtheorem{definition}{Definition}[section]
\crefname{lemma}{Lemma}{Lemmas}
\Crefname{lemma}{Lemma}{Lemmas}
\crefname{thm}{Theorem}{Theorems}
\Crefname{thm}{Theorem}{Theorems}
\crefname{corollary}{Corollary}{Corollaries}
\Crefname{corollary}{Corollary}{Corollaries}
\crefname{thm}{Theorem}{Theorems}
\crefname{lem}{Lemma}{Lemmas}
\Crefname{thm}{Theorem}{Theorems}
\Crefname{lem}{Lemma}{Lemmas}
\crefname{cor}{Corollary}{Corollaries}
\Crefname{cor}{Corollary}{Corollaries}
\definecolor{myred}{rgb}{1.0, 0.22, 0.0}
\definecolor{mygreen}{rgb}{0.4, 1.0, 0.0}
\definecolor{amber}{rgb}{1.0, 0.75, 0.0}
\newcommand{\pgfextractangle}[3]{%
    \pgfmathanglebetweenpoints{\pgfpointanchor{#2}{center}}
                              {\pgfpointanchor{#3}{center}}
    \global\let#1\pgfmathresult  
    }
\newcommand{\TikzHar}[4]{
    \pgfextractangle{\angle}{#1}{#2}
        \draw[transform canvas={yshift= cos(\angle)*#3, xshift= -sin(\angle)*#3},-left to, line width = #4] (#1) -- (#2);
    \draw[transform canvas={yshift= -cos(\angle)*#3, xshift= sin(\angle)*#3},left to-, line width = #4] (#1) -- (#2);
}
\newcommand{\har}{\rightleftharpoons} 
\newcommand{\G}{{G}} 
\newcommand{\V}{{V}} 
\newcommand{\E}{{E}} 
\newcommand{\ra}{\rightarrow}
\newcommand{\la}{\leftarrow}
\newcommand{\pa}[1]{{pa}(#1)}
\newcommand{\ch}[1]{{ch}(#1)}
\newcommand{\an}[1]{{an}(#1)}
\newcommand{\de}[1]{{de}(#1)}
\newcommand{\dsepbig}[3]{{d\textrm{-}sep_{\G} \big(#1, #2 \, \big| \, #3 \big) }}
\newcommand{\notdsepbig}[3]{{\cancel{d\textrm{-}sep}_{\G} \big(#1, #2 \, \big| \, #3 \big) }}
\newcommand{\TRAILSbig}[3]{{{TRAILS} \big(#1, #2 \, \big| \, #3 \big) }}
\newcommand{\smallerTRAIL}{<_{TRAIL}}
\newcommand{\ConvCon}[1]{{{ConvCon}(#1) }}
\newcommand{\Cardi}[1]{{\big| #1 \big| }}
\newcommand{\cdotslong}{\cdots \cdots \cdots}
\newcommand{\prop}{\mathfrak{P}}
\newcommand{\sumlim}{\sum\limits}
\newcommand{\n}[2]{{n_{{#1}}({#2})}}
\title{Minimal Trails in Restricted DAGs}
\author{Alexis Derumigny\thanks{Department of Applied Mathematics, Delft University of Technology, Delft, The Netherlands.
E-mail address: a.f.f.derumigny@tudelft.nl
},
Niels Horsman\thanks{Department of Applied Mathematics, Delft University of Technology, Delft, The Netherlands},
Dorota Kurowicka\thanks{Department of Applied Mathematics, Delft University of Technology, Delft, The Netherlands.
E-mail address: d.kurowicka@tudelft.nl
}
}
\date{\today}
\begin{document}

\maketitle

\begin{abstract}
In this paper, the properties of minimal trails in a directed acyclic graph that is restricted not to contain an active cycle are studied.
We are motivated by an application of the results in the copula-based Bayesian Network model developed recently.
We propose a partial order on the set of trails activated by a certain subset of nodes, and show that every minimal trail, according to such an order, has a simple structure.

\medskip

\noindent
\textbf{Keywords:} Directed acyclic graph, Bayesian network, Trail.

\noindent
\textbf{MSC (2020):} 05C20, 05C38.
\end{abstract}


\section{Introduction}
The properties of directed acyclic graphs (DAGs) are extensively studied and found applications in many areas \cite{Bang_Jensen}.
A well-known fact about DAGs is that for these graphs a non-unique order of nodes, called well order, can always be found such that parents appear earlier in this order than their children.
This directionality is very useful as a notion of causality \cite{Pearl} or flow of information \cite{Ahujaetal}, but it also allows for an intuitive representation of a joint distribution of random variables that are assumed to correspond to nodes of a DAG.

A Bayesian Network (BN) is a graphical model for a set of random variables where the qualitative part is a DAG. 
This DAG represents in an intuitive way the relationships between the random variables that correspond to its nodes.
When two nodes in the DAG are connected by an arc, then the variables corresponding to these nodes are dependent.
If there is no arc between the nodes, then the corresponding random variables are either independent, or conditionally independent given some subset of variables (corresponding to a subset of nodes in the DAG).
Independence and conditional independence in a distribution represented by a BN can be read directly from the DAG by observing the structure of the graph, through the d-separation (\cref{def:graph_dsep}).
The concept of trails is crucial to define d-separation.
A trail $T$ from a node $x$ to a node $y$ is a path in the undirected graph $\overline{G}$ obtained from the DAG $G$ by removing directions.
$T$ is represented as
$$x \har v_1 \har \cdots \har v_n \har y,$$
where the symbol $\har$ corresponds to an arrow that can have one of possible directions in the DAG.
Given a set of nodes $Z$, a trail is either said to be blocked or to be activated by $Z$.
A trail $T$ can be blocked by $Z$ in two distinct ways:
1) if there is a node $v_i \in T$ such that in $G$ there is a serial or diverging connection at this node
($v_{i-1} \to v_i \to v_{i+1}$, $v_{i-1} \leftarrow v_i \rightarrow v_{i+1}$)
and $v_i\in Z$ or
2) if at $v_i$ there is a converging connection
($v_{i-1} \rightarrow v_i \leftarrow v_{i+1}$)
and neither $v_i$ nor any of its descendants are in $Z$.

If every trail from node $x$ to node $y$ is blocked by $Z$, then the random variables corresponding to $x$ and $y$ are conditionally independent given variables corresponding to nodes in $Z$ \cite{KollerFriedman}.

In this paper we study properties of trails activated by some set of nodes $Z$. Such a trail $T$ is of the form
\begin{equation}
\label{eq:full_trail}  
     x \har t^{0}_1 \har \cdots \har t^{0}_{\n{t}{0}} \ra 
    c_1 \la 
    \cdots \ra
    c_C \la t^{C}_1 \har \cdots \har t^{C}_{\n{t}{C}} \har y,
\end{equation}
where the nodes with converging connections; $c_1,\dots, c_C$, might be in  $Z$ (or their descendants are in $Z$ as is discussed in detail in \cref{sec:setoftrails}) and all other nodes on the trail are not in $Z$.
We are interested in the existence of certain arcs in $\G$ that can be deduced by observing properties of such trails. 

Our approach is to define a set of trails activated by $Z$ and equip this set with a partial order.
Then we are able to study minimal trails according to this order and show that such trails in a restricted DAG that does not contain a certain type of induced subgraph have nice properties (see \cref{thm:minimal_trails}).
The subgraph we will not allow is called an active cycle (\cref{def:active_cycle}) which is a cycle in the corresponding undirected graph that satisfy some conditions.
We will also introduce additional constraints on the types of relationships that we allow in the graph and consider how these extra constraints influence the properties of minimal trails
(see \cref{thm:minimal_trails_local_relationship}).

In \cref{sec:graphs} the necessary concepts that concern directed graphs and the d-separation are introduced.
Then in \cref{sec:setoftrails} the set of trails activated by a set $Z$ is defined and the partial order of elements in this set is presented.
We define sub-trails as the trails between the converging connections.
In the trail~\eqref{eq:full_trail} these are elements $t^{i}_1 \har \cdots \har t^{i}_{\n{t}{i}}$ where $i=1, \dots ,C$.
Such simple sub-trails are studied first in \cref{sec:proof_trail_with_no_converging}.
Moreover, since we want to use the results of this section in proofs concerning more general types of trails the results will be shown to hold also for trails that are minimal and contain elements in a subset of nodes, $K$.
The main results of the paper are contained in \cref{sec:proof:properties_trails_with_converging}.

\section{Directed graphs}
\label{sec:graphs}

Let $\G = (V,E)$ be a directed graph with nodes $V$ and arcs $E$.
We only consider simple graphs without loops.
Moreover, let $\overline{\G}$ be the associated undirected graph called the \textbf{skeleton} of $G$, obtained from $G$ by removing the directions of the arcs.
A \textbf{path} is a sequence of nodes $(v_1,v_2,\dots,v_n)$ such that $\{v_1,v_2,\dots,v_n\}\subseteq \V$ and $\{(v_1,v_2), (v_2,v_3), \dots, (v_{n-1},v_n) \} \subseteq \E$ for some integer $n > 0$ called the \textbf{length} of the path.
A \textbf{trail} is an undirected path in $\overline{\G}$ which we will represent as $v_1 \har \cdots \har v_n$.
An arc between non-consecutive nodes in the trail is referred to as a \textbf{chord}.

A directed graph $G' = (V', E')$ is a \textbf{subgraph} of $G$ if $V'\subseteq V$, $E'\subseteq E$ and
for all arcs $w \rightarrow v \in E'$ the nodes $w$ and $v$ are in $V'$.
If $E'$ contains all arcs in $G$ between nodes in $V'$, then $G'$ is said to be \textbf{induced} by $V'$.

A path of the form $(v_1,v_2,\dots,v_n,v_1)$ is called a \textbf{cycle}.
We call $\G$ \textbf{acyclic} if it does not contain any cycle.

For each arc $w \rightarrow v \in \E$ the node $w$ is said to be the \textbf{parent} of $v$ and $v$ is said to be the \textbf{child} of $w$.
For a node $v \in \V$ the sets containing all its parents and children are denoted by $\pa{v}$ and $\ch{v}$, respectively. If there exists a path from $w$ to $v$, then $w$ is said to be an \textbf{ancestor} of $v$ and $v$ is said to be a \textbf{descendant} of $w$.
For a node $v \in V$ the sets containing all its ancestors and descendants are denoted by $\an{v}$ and $\de{v}$, respectively.

If a node has at least two parents, then we say that there is a \textbf{v-structure} at $v$ 
(also called \textbf{converging connection}) and when it has at least two children is referred to as a \textbf{diverging connection}.
Moreover, paths of the form $( v_1,v_2,v_3 )$ or $( v_3,v_2,v_1 )$ will be called \textbf{serial connections}.
Hence the following connections are of interest:
converging connection $v_1 \rightarrow v_2 \leftarrow v_3$;
serial connection $v_1 \rightarrow v_2 \rightarrow v_3$;
and diverging connection $v_1 \leftarrow v_2 \rightarrow v_3$.

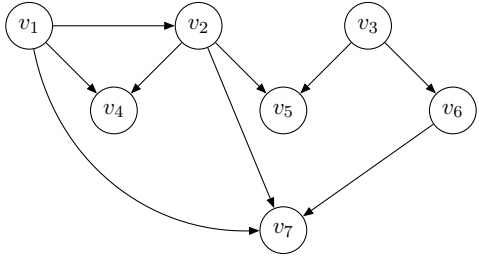
\begin{figure}[H]
    \centering
    \begin{tikzpicture}[scale=0.8, transform shape, node distance=1.2cm, state/.style={circle, draw=black}]
    
    \node[state] (v1) {$v_1$};
    \node[state, below right = of v1] (v4) {$v_4$};
    \node[state, above right = of v4] (v2) {$v_2$};
    \node[state, below right = of v2] (v5) {$v_5$};
    \node[state, above right = of v5] (v3) {$v_3$};
    \node[state, below right = of v3] (v6) {$v_6$};
    \node[state, below = of v5] (v7) {$v_7$};
    
    \begin{scope}[>={Stealth[length=4pt,width=3pt,inset=0pt]}]
    
    \path [->] (v1) edge node {} (v4);
     \path [->] (v1) edge node {} (v2);
    \path [->] (v1) edge[bend right= 40] node {} (v7);
    \path [->] (v2) edge node {} (v4);
    \path [->] (v2) edge node {} (v5);
    \path [->] (v2) edge node {} (v7);
    \path [->] (v3) edge node {} (v5);
    \path [->] (v3) edge node {} (v6);
    \path [->] (v6) edge node {} (v7);
    
    \end{scope}
    
    \end{tikzpicture}
  \caption{Directed acyclic graph with seven nodes.}
    \label{fig:DAG7}
\end{figure}
In this paper we restrict DAGs not to allow active cycle which is defined below and represented in \cref{fig:int_in_n_dim}.

\begin{definition}[Active cycle]
\label{def:active_cycle}
    Let $\G$ be a DAG.
    Consider a node $v \in \V$ with distinct parents $w, z\in \pa{v}$ which are connected by a trail $w \har x_1 \har \cdots \har x_n \har z$ satisfying the following conditions:
    \begin{enumerate}[label = (\roman*)]
        \item $n \geq 1$.
        \item $w \har x_1 \har \cdots \har x_n \har z$ consists of only diverging or serial connections.
        \item $v \leftarrow w \har x_1 \har \cdots \har x_n \har z \rightarrow v$ contains no chords.
    \end{enumerate}
    Then, the trail $v \leftarrow w \har x_1 \har \cdots \har x_n \har z \rightarrow v$ is called an \textbf{active cycle} in $\G$.
    Furthermore, $\G$ is said to contain an active cycle.
\end{definition}

\begin{minipage}{0.45\textwidth}
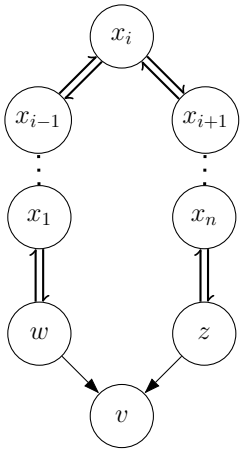
\begin{figure}[H]
    \centering
    \begin{tikzpicture}[scale=0.7, transform shape, node distance=1cm, state/.style={circle, font=\Large, draw=black, minimum size=1.2cm}]

    \node[state] (Un+1) {$v$};
    \node[state, above left = of Un+1] (U1) {$w$};
    \node[state, above right = of Un+1] (Un) {$z$};
    \node[state, above = 1cm of U1] (U2) {$x_1$};
    \node[state, above = 1cm of Un] (Un-1) {$x_{n}$};
    \node[state, above = 6cm of Un+1] (Ui) {$x_i$};
    \node[state, below left = of Ui] (Ui-1) {$x_{i-1}$};
    \node[state, below right = of Ui] (Ui+1) {$x_{i+1}$};

    \begin{scope}[>={Stealth[length=6pt,width=4pt,inset=0pt]}]

    \TikzHar{U1}{U2}{0.13em}{0.8pt}
    \TikzHar{Ui-1}{Ui}{0.13em}{0.8pt}
    \TikzHar{Ui}{Ui+1}{0.13em}{0.8pt}
    \TikzHar{Un-1}{Un}{0.13em}{0.8pt}
    
    \path [->] (U1) edge node {} (Un+1);
    \path [->] (Un) edge node {} (Un+1);

    \draw[loosely dotted, line width=0.4mm] (U2) -- (Ui-1);
    \draw[loosely dotted, line width=0.4mm] (Un-1) -- (Ui+1);
    
    \end{scope}
    \end{tikzpicture}
    \caption{Active cycle, where 
    $\har$ represents arcs that form only diverging or serial connections.}
    \label{fig:int_in_n_dim}
\end{figure}
\end{minipage}
\hspace{0.5cm}
\begin{minipage}{0.45\textwidth}
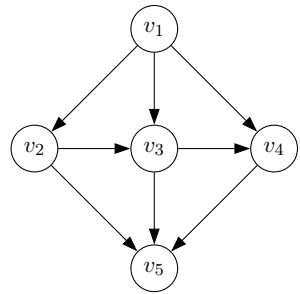
\begin{figure}[H]
    \centering
    \begin{tikzpicture}[scale=0.8, transform shape, node distance=1.2cm, state/.style={circle, draw=black}]
    \node [state] (v1) {$v_1$};
    \node[state, below= of v1] (v3) {$v_3$};
    \node[state, left= of v3] (v2) {$v_2$};
    \node[state, right= of v3] (v4) {$v_4$};
    \node[state, below= of v3] (v5) {$v_5$};
    \begin{scope}[>={Stealth[length=6pt,width=4pt,inset=0pt]}]
    \path [->] (v1) edge node {} (v2);
    \path [->] (v1) edge node {} (v3);
    \path [->] (v1) edge node {} (v4);
    \path [->] (v2) edge node {} (v3);
    \path [->] (v3) edge node {} (v4);
    \path [->] (v2) edge node {} (v5);
    \path [->] (v3) edge node {} (v5);
    \path [->] (v4) edge node {} (v5);
    \end{scope}
    \end{tikzpicture}
    \caption{A graph containing an active cycle. } \label{fig:active_cycle5}
\end{figure}
\end{minipage}

\medskip   

Note that the DAG in \cref{fig:DAG7} does not contain an active cycle even though its skeleton contains an undirected cycle $(v_7,v_2,v_5,v_3,v_6,v_7)$.
Indeed, there is a v-structure at $v_7$ whose parents are $v_2$ and $v_6$.
However, the trail $v_2 \har v_5 \har v_3 \har v_6 $ has a v-structure at node $v_5$.
This trail is of the form:
\begin{equation}
\label{eq:small_trail}
 v_2 \ra v_5 \la v_3 \har v_6. 
\end{equation}
Note that trail~\eqref{eq:small_trail} is of the form \eqref{eq:full_trail} with just one v-structure ($C=1$).
In \cref{fig:active_cycle5} a graph with an active cycle
$v_5 \leftarrow v_2 \leftarrow v_1 \rightarrow v_4 \rightarrow v_5$ is presented.

\medskip

An important concept in graphical models and in particular in BNs (whose qualitative part is represented by DAG) is that two subsets of nodes can be connected through trails.
These trails can be either blocked or activated given another subset of nodes \cite{Pearl}. 

\begin{definition}[d-separation]\label{def:graph_dsep}
    Let $\G=(\V,\E)$ be a directed graph and
    let $X, Y, Z\subseteq \V$ be disjoint and $X,Y$ nonempty sets.
    Then, $Z$ is said to \textbf{d-separate} $X$ and $Y$ in $\G$, denoted by $\dsepbig{X}{Y}{Z}$,
    if every trail $v_1 \har v_2 \har \cdots \har v_n$ with $v_1\in X$ and $v_n\in Y$ contains at least one node $v_i$ satisfying one of the following conditions:
    \begin{itemize}
        \item The trail forms a v-structure at $v_i$, i.e. 
        $v_{i-1} \rightarrow v_{i} \leftarrow v_{i+1}$,
        and the set $\{v_i\} \sqcup de(v_i)$ is disjoint from $Z$.
        \item The trail does not contain a v-structure at $v_i$ and $v_i\in Z$.
    \end{itemize}
    If a trail satisfies one of the conditions above, it is said to be \textbf{blocked} by $Z$, else it is \textbf{activated} by $Z$.
    Furthermore, if $X$ and $Y$ are not d-separated by $Z$, we use the notation $\notdsepbig{X}{Y}{Z}$.
\end{definition}

We can see that the trail~\eqref{eq:small_trail} is blocked by the empty set, because at $v_5$ there is a converging connection and this node does not belong to $Z=\emptyset$.
All other trails between $v_2$ and $v_6$ go through $v_7$ with converging connections.
Hence these trails are also blocked by $Z=\emptyset$ and we conclude that $\dsepbig{v_2}{v_6}{\emptyset}$.
It is not the case, however, that $\dsepbig{v_2}{v_6}{v_5}$ as the set $Z$ such that $v_5\in Z$ and $v_3 \notin Z$ activates trail~\eqref{eq:small_trail}.

\medskip

Note that the directed separation defined above is not equivalent to the concept of directed separation discussed in \cite{Erde2020}, where the separation concerns division of nodes in the graph into two subsets.
Our interest is in line with d-separation defined in \cite{Pearl} and discussed in \cite{Geiger_et_al_1990,KollerFriedman}.

\medskip

In the following sections, we will study properties of trails in DAGs without active cycles. The restriction we consider is one of the restrictions necessary 
for relatively efficient computations in the copula-based BNs introduced recently \cite{AHK_PCBN_2024}.

\section{Ordered set of trails}
\label{sec:setoftrails}
We start this section by introducing notation of general trails. Let us denote as  $X$, $Y$ and $Z$ three disjoint subsets of $\V$, where $Z$ is allowed to be empty.
First, we define the set of all trails from $X$ to $Y$ activated by $Z$ in $\G$.

\begin{definition}
    Let $X, Y, Z \subseteq \V$ be disjoint subsets of $\V$.
    Define $\TRAILSbig{X}{Y}{Z}$ to be the set of trails from $X$ to $Y$ activated by $Z$. 
\end{definition}

The set of all converging connections in a trail $T$ in $\TRAILSbig{X}{Y}{Z}$ is defined next.

\begin{definition}
\label{def:TRAILS_XYZ}
    For $T \in \TRAILSbig{X}{Y}{Z}$, the set
    $\ConvCon{T} := (c_1, \dots, c_C)$ is the ordered set of nodes corresponding to converging connections in $T$, ordered by first appearance on the trail from $X$ to $Y$.
    The trail $T$ is of the form \eqref{eq:full_trail}
    with $x \in X$ and $y \in Y$.
    The cardinality of the set $\ConvCon{T}$ is denoted by $C := C(T) = \Cardi{\ConvCon{T}}$.
\end{definition}

For a trail $T$ to be activated by $Z$, we must have that for all $i=1,\dots,C$, $c_i$ is in $Z$, or one of its descendants is in $Z$, see \cref{def:graph_dsep}.
To differentiate situations when a node $c_i$ is in $Z$ or when this node is not in $Z$ but its descendant is
we introduce the definition of \textbf{closest descendant}.

\begin{definition}[Closest descendant]
\label{def:closest_descendant}
    Let $T$ be a trail in $\TRAILSbig{X}{Y}{Z}$ and $i \in \{1,\dots, C(T) \}$.
    If $c_i \notin Z$, then its \textbf{closest descendant} in $Z$ is a node $Z(c_i) \in Z$ such that there exist a shortest path
    $$
    c_i \ra d^i_1 \ra \cdots \ra d^i_{\n{Z}{i}} \ra Z(c_i)
    $$
    with $d^i_j \notin Z$ for all $j = 1, \dots, \n{Z}{i}$.

    Such a path is referred to as a \textbf{descendant path} of $c_i$.
    Its nodes on the descendant path are denoted by the symbol ``$d$'' where a superscript $i$ indicates that $d^i_j$ lies on the descendant path of $c_i$, and the subscript $j$ indicates that it is the $j$-th node on this path.
    The length of the descendant path is formally denoted by $\n{Z}{i}$, but we will often simply write $n:=\n{Z}{i}$.
    If $c_i \in Z$, we also say that $c_i = Z(c_i)$.
    Finally, we use the conventions $d^i_0 := c_i$ and $d^i_{n + 1} := Z(c_i)$.
\end{definition}

A trail $T$ in $\TRAILSbig{X}{Y}{Z}$ can be seen as a concatenation of trails activated by the empty set.
For instance, consider the trail~\eqref{eq:full_trail},
then each trail $c_i \la t^{i}_1 \har \cdots \har t^{i}_{\n{t}{i}} \ra c_{i+1}$ is a trail between two nodes activated by the empty set.
Such trails are referred to as \textbf{subtrails}.

\begin{definition}[Subtrails]
\label{def:subtrails}
    Let $T$ be a trail in $\TRAILSbig{X}{Y}{Z}$.
    Suppose that $T$ takes the form
    $$
    x \har t^{0}_1 \har \cdots \har t^{0}_{\n{t}{0}} \ra 
    c_1 \la 
    \cdotslong \ra
    c_C \la t^{C}_1 \har \cdots \har t^{C}_{\n{t}{C}} \har y.
    $$
    The following are referred to as the \textbf{subtrails} of $T$:
    \begin{align*}  
        &x \har t^{0}_1 \har \cdots \har t^{0}_{\n{t}{0}} \ra c_1,\\
        &c_i \la t^{i}_1 \har \cdots \har t^{i}_{\n{t}{i}} \ra c_{i+1}, \text{ with } i \in \{1,\dots,C-1 \},\\
        &c_C \la t^{C}_1 \har \cdots \har t^{C}_{\n{t}{C}} \har y.
    \end{align*}
    The nodes on the subtrails are denoted by the symbol ``$t$'' where a superscript $i$ indicates that $t^i_j$ lies in between $c_i$ and $c_{i+1}$ with the conventions $c_0 := x$ and $c_{C+1}:=y$.
    The subscript indicates its location on the subtrail.
    The length of a subtrail is formally denoted by $\n{t}{i}$, but we will often simply write $n:=\n{t}{i}$.
    
    Furthermore, we use the conventions $c_{0} := x$, $c_{C+1} := y$, $t^i_0 := c_i$ and $t^i_{n + 1} := c_{i+1}$.
    Also, we denote the common ancestor among a trail between $c_i$ and $c_{i+1}$ by $t^i_m$, see \cref{def:common_ancestor}.
\end{definition}

Now, we define a partial order for the set $\TRAILSbig{X}{Y}{Z}$. This allows to compare trails in $\TRAILSbig{X}{Y}{Z}$ while taking into account their particular structure.

\begin{definition}[Smaller trail]
\label{def:better_trail}
    Let $T_1$ and $T_2$ belong to $\TRAILSbig{X}{Y}{Z}$.
    We say that $T_1$ is a \textbf{smaller} trail than $T_2$,
    denoted by $T_1 \smallerTRAIL T_2$,
    if one of the following conditions is satisfied:
    \begin{enumerate}
        \item $\Cardi{\ConvCon{T_1} \setminus Z}
        < \Cardi{\ConvCon{T_2} \setminus Z}$.
        \item 1) is an equality
        and $\Cardi{\ConvCon{T_1}} := C(T_1) < C(T_2) =: \Cardi{\ConvCon{T_2}}$.
        \item 1) and 2) are equalities and
        $\sumlim_{i=1}^{C(T_1)} \n{Z}{i}(T_1)
        < \sumlim_{i=1}^{C(T_2)} \n{Z}{i}(T_2)$.
        \item 1), 2) and 3) are equalities and
        $\sumlim_{i=0}^{C(T_1)} \n{t}{i}(T_1)
        < \sumlim_{i=0}^{C(T_2)} \n{t}{i}(T_2)$.
    \end{enumerate}
\end{definition}

Note that the order $\smallerTRAIL$ on the set $\TRAILSbig{X}{Y}{Z}$ is induced 
by the alphabetical order on the vector
$\Big(\Cardi{\ConvCon{T} \setminus Z},$  $\Cardi{\ConvCon{T}},$  
$\sum_{i=1}^{C(T_1)} \n{Z}{i}(T)$, 
$\sum_{i=0}^{C(T_1)} \n{t}{i}(T)
\Big)$, 
for $T \in \TRAILSbig{X}{Y}{Z}$.
Indeed, we first order trails by number of converging connections not in $Z$, then by number of converging connections, then by total length of descendant paths and finally by total length of the subtrails.
This means that a smaller trail satisfies the following conditions.

\begin{enumerate}[label = C\arabic*.]
    \item It is a trail from $X$ to $Y$ activated by $Z$. \label{cond:active_XYZ}
    \item It contains a smaller number of converging nodes not contained in $Z$. \label{cond:least_converging_not_Z}
    \item Under the restrictions above, it contains fewer converging connections. \label{cond:least_converging}
    \item Under the restrictions above, the paths from converging nodes not contained in $Z$ to its closest descendants are shorter.\label{cond:shortest_descendant_paths}
    \item Under the restrictions above, it is a shorter such trail. \label{cond:shortest}
\end{enumerate}

The shortest trail in the DAG in \cref{fig:DAG7} between $X=\{v_1\}$ and $Y=\{v_6\}$ activated by $Z=\{v_5\}$ is trail  $v_1 \ra v_2 \ra v_5 \la v_3 \ra v_6$.

\begin{remark}
    In general, $\smallerTRAIL$ is not a total order.
    For example, consider the graph defined by $v_1 \ra v_2$, $v_2 \ra v_4$,
    $v_1 \ra v_3$, $v_3 \ra v_4$ and $v_2 \ra v_3$. This graph is the diamond graph with an horizontal arc to avoid the active cycle.
    Note that both trails
    $T_1 := (v_1 \ra v_2 \ra v_4)$ and 
    $T_2 := (v_1 \ra v_3 \ra v_4)$ belong to $\TRAILSbig{v_1}{v_4}{\emptyset}$.
    They are not comparable since all 4 comparisons in~\cref{def:better_trail} are equalities.
    The trails $T_1$ and $T_2$ are actually both minimal elements in the partially ordered set 
    $\left(\TRAILSbig{v_1}{v_4}{\emptyset} \, , \, \smallerTRAIL \right)$.
    On the contrary, the trail
    $T_3 := (v_1 \ra v_2 \ra v_3 \ra v_4)$ belongs to $\TRAILSbig{v_1}{v_4}{\emptyset}$, but is not minimal. This is because
    $T_1 \smallerTRAIL T_3$ and $T_2 \smallerTRAIL T_3$.
\end{remark}

\section{About trails with no converging connection}
\label{sec:proof_trail_with_no_converging}

In this section the results concerning sub-trails are included. 
First, a simple but interesting result  which states that trails with no converging connections are equivalent to trails activated by the empty set is presented.

\begin{lemma} \label{lemma:trail_activ_emptyset}
    A trail is activated by the empty set if and only if it does not contain a converging connection.
\end{lemma}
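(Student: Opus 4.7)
The plan is to argue directly by unpacking \cref{def:graph_dsep} in the special case $Z = \emptyset$, and showing that each of the two blocking conditions becomes trivial in one direction and impossible in the other. I would prove the contrapositive: a trail is \emph{blocked} by $\emptyset$ if and only if it contains a converging connection.

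For the ``$\Leftarrow$'' direction, suppose a trail $v_1 \har \cdots \har v_n$ has a converging connection at some node $v_i$, that is $v_{i-1} \rightarrow v_i \leftarrow v_{i+1}$. Applying the first bullet of \cref{def:graph_dsep} with $Z = \emptyset$, the requirement $\bigl(\{v_i\} \cup de(v_i) \bigr) \cap Z = \emptyset$ is automatic since $Z = \emptyset$. Hence the trail is blocked by $\emptyset$ at $v_i$, so it is not activated.

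For the ``$\Rightarrow$'' direction, suppose the trail contains no converging connection. I would check that neither of the two blocking conditions in \cref{def:graph_dsep} can be triggered. The first condition requires the existence of a v-structure at some $v_i$, which is ruled out by assumption. The second condition requires some $v_i \in Z$, which is impossible since $Z = \emptyset$. Therefore no node along the trail can block it, so by definition the trail is activated by $\emptyset$.

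The argument is essentially a direct unwinding of the definitions, so there is no real obstacle; the only point requiring care is to observe that intersecting with the empty set makes the descendant condition in the converging case vacuously true, which is precisely what makes every converging connection blocking when $Z=\emptyset$.
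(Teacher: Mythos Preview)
Your proof is correct and takes the same approach as the paper: the paper's own proof simply states that the lemma follows directly from \cref{def:graph_dsep}, and your argument is precisely a careful unpacking of that definition in the case $Z=\emptyset$.
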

\begin{proof}
    This statement follows directly from the definition of d-separation, see \cref{def:graph_dsep}.
\end{proof}

If a trail $v_1 \har \cdots \har v_n$ contains no node with converging connection in $G$, then it must have at most one node with diverging connection.
An intuitive property of such a diverging node is that it is an ancestor of both end-points $v_1$ and $v_n$.
Therefore, we refer to it as a \textbf{common ancestor}.
Whenever a trail contains only serial connections, the common ancestor is defined to be the end-point to which the arrows point away from.

\begin{definition}[Common ancestor]
\label{def:common_ancestor}
    Let $\G$ be a DAG and let $x_0$ and $x_{n+1}$ be two nodes joined by a trail 
    \begin{equation}
\label{eq:simple_trail}
x_0 \har x_1 \har \cdots \har x_n \har x_{n+1}
\end{equation}
    with no converging connections.
    The \textbf{common ancestor} $x_m$ among this trail is defined as follows.
    \begin{itemize}
        \item If the trail contains only serial connections and $x_0$ is an ancestor of $x_{n+1}$, then $x_m = x_0$.
        \item If the trail contains only serial connections and $x_{n+1}$ is an ancestor of $x_{0}$, then $x_m = x_{n+1}$.
        \item If the trail contains a node with diverging connection, then $x_m$ is this node.
    \end{itemize}
\end{definition}
The common ancestor, $x_m$, is well-defined, since exactly one of the cases above holds.
From now on, in every figure the common ancestors will be displayed in the middle of a trail.
Therefore, the common ancestor will always be denoted with a subscript ``$m$'' which is an abbreviation for ``middle''.

It will be of interest for results concerning more general types of trails to consider trails between nodes, e.g. $x_0$ and $x_{n+1}$, for which all nodes (except $x_0$ and $x_{n+1}$) on the trail are included in a certain subset $K \subseteq \V$. In this case we say that the trail consists only of elements of $K$.
The nodes $x_0$ and $x_{n+1}$  (end-points of this trail) may or may not be in $K$.

\begin{definition}
    Let $\G=(\V, \E)$ be a DAG, let $K \subseteq \V$.
    We say that the trail~\eqref{eq:simple_trail} consists only of elements of $K$ if for all $i = 1, \dots, n$, $x_i \in K$.
\end{definition}

In the case of trails that do not contain converging connections the order $\smallerTRAIL$ becomes very simple.
If $T_1,T_2$ do not contain converging connections then $T_1\smallerTRAIL T_2$ whenever the number of nodes in $T_1$ is smaller than the number of nodes in $T_2$ (and both trails are not comparable if they have the same length).
Hence we can consider a shortest such trail.
In a few lemmas below we prove that a shortest trail satisfying a certain property also satisfies a second property.
Let us first formalize what is meant by a property of a trail.

\begin{definition}[Trail property]
\label{def:trail_property}
    Let $\G$ be a DAG containing a trail~\eqref{eq:simple_trail}.
    A \textbf{property} $\prop := \prop(x_0, \dots, x_{n+1})$ specifies the existence of certain arcs between the nodes on the trail.
    Here, we mean that $\prop$ states that $\E$ contains a certain set of arcs $\{x_i \ra x_j; \, i \in I, j \in J\}$ with $I,J \subseteq \{0, 1, \dots,  n+1\}$.

    For instance, the following are regarded as trail properties:
    \begin{itemize}
        \item The first arc of the trail points to the left; $x_0 \la x_1$.
        \item The $i$-th and $j$-th node on the trail are adjacent; $x_i \har x_j$.
        \item The trail is of the form $x_0 \la x_1 \ra x_2 \ra \cdots \ra x_{n-1} \ra x_n$, and we have that $x_0 \ra x_{n-1}$. 
    \end{itemize}
\end{definition}

Proofs where one property of a trail implies another will not only hold for shortest trails but also for shortest trails consisting of nodes in a subset $K \subseteq \V$. 
For instance, \cref{lemma:no_chords} also holds for shortest trails activated by the empty set and consisting of nodes in $K$.
Instead of repeatedly saying that a statement holds for both a shortest trail and a shortest trail consisting of nodes in a subset $K$ and proving both cases, we establish the following lemma.

\begin{lemma}
    \label{lemma:generalisation_to_K}
    For a trail $T$ of the form \eqref{eq:simple_trail}
    in DAG $\G$, let $\prop_1(x_0, x_1, \dots, x_{n+1})$ 
    and $\prop_2(x_0, x_1, \dots, x_{n+1})$
    be two trail properties.
    Let $\mathcal{G}$ be a set of DAGs such that
    \begin{itemize}
        \item for any DAG $\G = (\V, \E) \in \mathcal{G}$, for any $x_0, x_{n+1} \in V$,
        and for any shortest trail $T$ between $x_0$ and $x_{n+1}$ that satisfies $\prop_1$,
        the property $\prop_2$ holds.
        \item if $\G$ belongs to $\mathcal{G}$, then any graph obtained by removing vertices from $\G$ also belongs to $\mathcal{G}$.
    \end{itemize}
    
    If $\G = (\V, \E)$ is a DAG in $\mathcal{G}$ and $K \subseteq \V$, 
    then for any shortest trail between $x_0$ and $x_{n+1}$ that satisfies $\prop_1$ and that consists only of elements of $K$, the property $\prop_2$ still holds.
\end{lemma}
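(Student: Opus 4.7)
The approach is to reduce the $K$-restricted setting to the unrestricted hypothesis via an induced subgraph. Given $\G = (\V,\E) \in \mathcal{G}$, a subset $K \subseteq \V$, and endpoints $x_0, x_{n+1} \in \V$, I would set $\V' := K \cup \{x_0, x_{n+1}\}$ and let $\G'$ denote the subgraph of $\G$ induced by $\V'$. Since $\G'$ is obtained from $\G$ by deleting the vertices of $\V \setminus \V'$, the closure assumption (second bullet) yields $\G' \in \mathcal{G}$.

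Next, let $T$ be a shortest trail in $\G$ between $x_0$ and $x_{n+1}$ that satisfies $\prop_1$ and consists only of elements of $K$. All nodes of $T$ lie in $\V'$, and because $\G'$ is an induced subgraph, every arc of $T$ also lies in $\G'$; hence $T$ is a trail of $\G'$. I would then argue that $T$ is a shortest trail in $\G'$ between $x_0$ and $x_{n+1}$ satisfying $\prop_1$. Indeed, any strictly shorter competitor $T'$ in $\G'$ satisfying $\prop_1$ would have every intermediate node in $\V' \setminus \{x_0, x_{n+1}\} \subseteq K$, so $T'$ would in particular be a trail in $\G$ consisting only of elements of $K$, satisfying $\prop_1$ and strictly shorter than $T$ --- contradicting the minimality of $T$.

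Applying the first bullet of the hypothesis to $\G' \in \mathcal{G}$ then shows that $T$ satisfies $\prop_2$ when evaluated in $\G'$. By \cref{def:trail_property}, $\prop_2$ only asserts the existence of certain arcs between nodes of $T$, each of which lies in $\V'$; and since $\G'$ is induced, an arc between two nodes of $\V'$ belongs to $\G'$ if and only if it belongs to $\G$. Therefore $\prop_2$ holds in $\G$ as well, giving the conclusion. The main delicate point --- and the only real obstacle --- is exactly this last piece of bookkeeping: one must use \cref{def:trail_property} to observe that a trail property refers only to arcs between nodes on the trail, so that passing to an induced subgraph containing those nodes does not change the truth value of either $\prop_1$ or $\prop_2$. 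Once this is noted, the lemma is an immediate instance of the reduction described above.
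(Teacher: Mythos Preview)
Your proof is correct and follows exactly the paper's approach: pass to the subgraph $\G^*$ induced by $K \cup \{x_0, x_{n+1}\}$, observe it lies in $\mathcal{G}$, and apply the hypothesis there. You are in fact more careful than the paper, which asserts without argument that $T$ is shortest in $\G^*$ and that $\prop_2$ transfers back to $\G$; your explicit use of \cref{def:trail_property} to justify that trail properties depend only on arcs among the $x_i$ (and hence are preserved under passing to an induced subgraph containing them) fills a small gap the paper leaves implicit.
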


\begin{proof}
    Assume a trail $T$ of the form \eqref{eq:simple_trail} is a shortest trail between $x_0$ and $x_{n+1}$ that satisfies $\prop_1$ and consists only of elements of $K$.
    Consider the subgraph $\G^*$ induced by
    $\{x_0, x_{n+1}\} \cup K$.
    Note that this trail is a shortest trail satisfying $\prop_1$ between $x_0$ and $x_{n+1}$ in $\G^*$.
    Therefore, by assumption, it must satisfy $\prop_2$.
\end{proof}

\begin{remark}
    The class of restricted DAGs (containing no active cycle) satisfies the second assumption of \cref{lemma:generalisation_to_K}.
    Indeed, deleting nodes from a graph can never introduce an active cycle.
\end{remark}

In the next lemmas we will study properties of minimal trails without converging connections.
This means we assume that it is a shortest trail in terms of the number of nodes.
Knowing that it is shortest trail allows us to exclude the presence of certain chords.
For instance, a shortest trail activated by the empty set does not contain a chord.

\begin{lemma}
    \label{lemma:no_chords}
    Let $\G$ be a DAG with no active cycles 
    and let $T$ of the form \eqref{eq:simple_trail}
    be a trail in $\G$ for some $n \geq 0$.
    If this is a shortest trail between $x_0$ and $x_{n+1}$ activated by the empty set, then $T$ has no chords.
\end{lemma}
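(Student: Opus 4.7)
The plan is to argue by contradiction. Suppose $T$ contains a chord, that is, an arc of $\G$ between $x_i$ and $x_j$ with $0 \leq i < j \leq n+1$ and $j - i \geq 2$. I would then exhibit a strictly shorter trail between $x_0$ and $x_{n+1}$ that is still activated by the empty set, contradicting the minimality of $T$. By \cref{lemma:trail_activ_emptyset}, $T$ has no converging connection, so by \cref{def:common_ancestor} there is a common ancestor $x_m$ on $T$ and the trail takes the form
\begin{equation*}
    x_0 \la x_1 \la \cdots \la x_m \ra x_{m+1} \ra \cdots \ra x_{n+1},
\end{equation*}
with the convention that one of the two halves may be empty if $m \in \{0, n+1\}$.

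I would then split into three cases according to the positions of $i$ and $j$ relative to $m$. If $i < j \leq m$, the trail segment from $x_i$ to $x_j$ is the directed path $x_j \ra x_{j-1} \ra \cdots \ra x_i$, so the acyclicity of $\G$ forces the chord to be oriented $x_j \ra x_i$ (the reverse direction would close a cycle). Replacing the segment $x_i \la \cdots \la x_j$ by this single arc produces a strictly shorter trail
\begin{equation*}
    x_0 \la \cdots \la x_i \la x_j \la x_{j+1} \la \cdots \la x_m \ra \cdots \ra x_{n+1}
\end{equation*}
that contains no converging connection, hence is activated by the empty set by \cref{lemma:trail_activ_emptyset}. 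The symmetric case $m \leq i < j$ is handled identically. If $i < m < j$, then $x_i$ and $x_j$ lie on two disjoint directed paths emanating from $x_m$, so either orientation of the chord is consistent with acyclicity; in either case, bypassing the detour through $x_m$ yields a shorter trail $x_0 \la \cdots \la x_i \har x_j \ra \cdots \ra x_{n+1}$ with a single diverging connection (at $x_i$ or at $x_j$, depending on the chord's direction) and no converging connection. Each case therefore contradicts the minimality of $T$.

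The main subtlety is verifying that the replacement trail in each case genuinely has no converging connection; this requires a short directional check at each of the two ``seam'' nodes $x_i$ and $x_j$, together with a separate but easy inspection of the boundary cases $i = 0$, $j = n+1$, $i = m$ and $j = m$, where one endpoint of the chord coincides with a trail endpoint or with the common ancestor. It is worth noting that the acyclicity of $\G$ alone seems sufficient for this lemma, and the standing ``no active cycle'' hypothesis of the paper is not actually invoked in the argument.
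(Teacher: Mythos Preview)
Your proof is correct and follows essentially the same approach as the paper's: both invoke \cref{lemma:trail_activ_emptyset} and \cref{def:common_ancestor} to write $T$ with its common ancestor $x_m$, then split on the position of the chord relative to $m$ and observe that each orientation either creates a directed cycle or yields a strictly shorter trail with no converging connection. Your remark that the ``no active cycle'' hypothesis is never used is also accurate; the paper's own case analysis likewise appeals only to acyclicity and minimality.
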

\begin{proof}
Let $x_m$ be the common ancestor of nodes in trail $T$, see \cref{def:common_ancestor}.

The proof is completed by remarking that the following connections are not possible:
\begin{itemize}
    \item $x_i \ra x_j$
    with $i < j \leq m$ results in a cycle.
    
    \item $x_i \la x_j$
    with $i < j \leq m$ results in a shorter trail.
    
    \item $x_i \ra x_j$
    with $m \leq i < j$ results in a shorter trail.
    
    \item $x_i \la x_j$
    with $m \leq i < j$ results in a cycle.
    
    \item $x_i \ra x_j$
    with $i < m < j$ results in a shorter trail.
    
    \item $x_i \la x_j$
    with $i < m < j$ results in a shorter trail.
\end{itemize}
\end{proof}

The lemma below states that if $\G$ contains a shortest trail of the form \eqref{eq:simple_trail} activated by the empty set for which $x_0 \ra v$ and $x_{n+1} \ra v$ for some node $v \in \V$, then for all $i=1,\dots,n$, $x_i \ra v$.

\begin{lemma}
\label{lemma:ac_all_parents}
    Let $\G$ be a DAG with no active cycles 
    and let $T$ of the form (\ref{eq:simple_trail})
    be a trail in $\G$ for some $n \geq 0$.
    If this is a shortest trail between $x_0$ and $x_{n+1}$ activated by the empty set, then
    \begin{itemize}
        \item[(i)] $\ch{x_0} \cap \ch{x_{n+1}} \subseteq
        \bigcap_{i = 1}^{n} \ch{x_i}$,
    
        \item[(ii)] $\forall i = 1, \dots, n,$
        $x_i \notin \ch{x_0} \cap \ch{x_{n+1}}$.
    \end{itemize}
\end{lemma}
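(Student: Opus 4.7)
My plan is to exploit two standard facts about $T$: by \cref{lemma:trail_activ_emptyset}, it has no converging connection, so its direction pattern can switch from left-pointing to right-pointing at most once — at the common ancestor $x_m$ (see \cref{def:common_ancestor}) — yielding the canonical form $x_0 \la \cdots \la x_m \ra \cdots \ra x_{n+1}$ for some $m \in \{0, \ldots, n+1\}$; by \cref{lemma:no_chords}, it is also chord-free. I prove (ii) first, and then use its conclusion to ensure $v \ne x_i$ when tackling (i).

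For (ii), suppose for contradiction that $x_0 \ra x_i$ and $x_{n+1} \ra x_i$ for some $i \in \{1, \dots, n\}$. The canonical form supplies a directed path from $x_i$ back to one endpoint — namely $x_i \ra x_{i-1} \ra \cdots \ra x_0$ if $i \le m$, or $x_i \ra x_{i+1} \ra \cdots \ra x_{n+1}$ if $i > m$ — which closes into a directed cycle via the arc coming back to $x_i$, contradicting that $\G$ is a DAG.

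For (i), let $v \in \ch{x_0} \cap \ch{x_{n+1}}$; by (ii), $v$ does not lie on $T$. Assume for contradiction that $x_j \nra v$ for some $j \in \{1, \dots, n\}$ and set $a := \max\{i < j : x_i \ra v\}$ and $b := \min\{i > j : x_i \ra v\}$; both exist since $0$ and $n+1$ lie in the corresponding sets, and $b - a \ge 2$. The idea is to show that
\[ v \la x_a \har x_{a+1} \har \cdots \har x_b \ra v \]
is an active cycle in the sense of \cref{def:active_cycle}, contradicting the hypothesis on $\G$. Its inner trail is a subtrail of $T$, so it inherits both the absence of converging connections and the absence of chords $x_p \har x_q$, and it has at least one intermediate node. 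It remains to rule out chords $v \har x_i$ for $a < i < b$: the direction $x_i \ra v$ would violate the extremality of $a$ and $b$; the direction $v \ra x_i$ splits on $i \le m$ versus $i \ge m$ and uses the canonical form of $T$ to produce a directed path $x_i \ra \cdots \ra x_a$ or $x_i \ra \cdots \ra x_b$, closing a directed cycle in $\G$ through $v$.

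The main obstacle is precisely this chord-freeness check for the candidate cycle, since one must rule out both orientations of any potential chord $v \har x_i$ using two quite different tools: the extremality of $(a, b)$ for one orientation, and acyclicity of $\G$ combined with the canonical direction pattern of $T$ for the other.
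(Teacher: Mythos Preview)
Your proof is correct and follows essentially the same construction as the paper for part (i): both identify the nearest parents $x_a, x_b$ of $v$ flanking a non-parent $x_j$, and verify that $v \la x_a \har \cdots \har x_b \ra v$ is an active cycle by ruling out the same three chord types (subtrail chords via \cref{lemma:no_chords}, $x_i \ra v$ via extremality, $v \ra x_i$ via acyclicity). The only organizational difference is that the paper derives (ii) as a corollary of (i) (since $x_i \in \ch{x_i}$ is impossible), whereas you prove (ii) directly first and then invoke it to guarantee $v \notin T$ in the proof of (i); both orderings are valid and the underlying ideas coincide.
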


\begin{proof}
    (ii) is a straightforward consequence of (i).
    We now prove (i).
    Let $v \in \ch{x_0} \cap \ch{x_{n+1}}$.
    To prove this, suppose that there exists an $i$ such that $v \notin \ch{x_i}$.
    We define the nodes $x_l$ and $x_r$ using the integers
    \begin{align*}
        l &:= \max \big\{j \in \{0, \dots, i-1 \}
        ;\, v \in \ch{x_j} \big\}, \\
        r &:= \min \big\{j \in \{i+1, \dots, n+1 \};
        \, v \in \ch{x_j} \big\}.
    \end{align*}
    With this notation, $x_l$ (respectively $x_r$) is the first node to the left (resp. right) of $x_i$ that is a parent of $v$.
    These integers $l$ and $r$ are well-defined since $v \in \ch{x_0} \cap \ch{x_{n+1}}$.
    Now, $\G$ contains the graph displayed in \cref{fig:bset_lemmahelp1}.
     \begin{figure}[H]
        \centering
        \begin{tikzpicture}[scale=0.8, transform shape, ,node distance=1.5cm, state/.style={circle, font=\Large, draw=black, minimum size=1cm}]
        \node (v) {$v$};
        \node[left of=v] (w1) {$x_0$};
        \node[right of=v] (w2) {$x_{n+1}$};
        \node[above left of=w1] (x1) {$x_1$};
        \node[above right of=w2] (xn) {$x_n$};
        \node[above of= x1] (xl) {$x_l$};
        \node[above of= xn] (xr) {$x_r$};
        \node[above right of=xl] (xi-1) {$x_{i-1}$};
        \node[above left of=xr] (xi+1) {$x_{i+1}$};
        \node[right of=xi-1] (xi) {$x_i$};
    
        \begin{scope}[>={Stealth[length=4pt,width=3pt,inset=0pt]}]
            \path [->] (w1) edge node[scale=0.8, above left] {} (v);
            \path [->] (w2) edge node[scale=0.8, above left] {} (v);
            \path [->, red] (xl) edge node[scale=0.8, above left] {} (v);
            \path [->, red] (xr) edge node[scale=0.8, above left] {} (v);
        \end{scope}

        \TikzHar{x1}{w1}{0.1em}{0.4pt}
        \TikzHar{w2}{xn}{0.1em}{0.4pt}
        {\color{red}\TikzHar{xi-1}{xi}{0.1em}{0.4pt}}
        {\color{red}\TikzHar{xi}{xi+1}{0.1em}{0.4pt}}
        
        
        
        \draw[loosely dotted, line width=0.4mm] (x1) -- (xl);
        \draw[red, loosely dotted, line width=0.4mm] (xl) -- (xi-1);
        \draw[loosely dotted, line width=0.4mm] (xn) -- (xr);
        \draw[red, loosely dotted, line width=0.4mm] (xr) -- (xi+1);
        
        
    
        \end{tikzpicture}
        \caption{Subgraph in $\G$ with the active cycle colored in red.}
        \label{fig:bset_lemmahelp1}
    \end{figure}
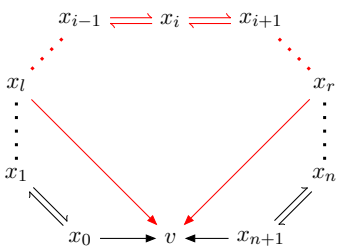
    
    Let us consider the trail 
    \begin{equation}
        v\la x_l \har \cdots \har x_i \har \cdots \har x_r \ra v.
        \label{trail:lemma:bset_btrails2}
    \end{equation}
    Any chord of this trail must be either a chord of $T$, an arc $v \ra x_j$ or an arc $x_j \ra v$ with
    $j\in \{l+1, \dots, r-1\}$.
    
    The first case is not possible by \cref{lemma:no_chords}. The second case is not possible because by \cref{lemma:trail_activ_emptyset} the trail $T$ contains at most one diverging connection, and therefore trail~(\ref{trail:lemma:bset_btrails2}) contains exactly one diverging connection.
    Consequently, any arc $v \ra x_j$ would result in a cycle.

    The third case is not possible by definition of $l$ and $r$.
    Therefore, we have shown that \eqref{trail:lemma:bset_btrails2} does not contain any chord.
    Thus, $\G$ contains the active cycle \eqref{trail:lemma:bset_btrails2}, which is a contradiction.
\end{proof}

It should be noted that we cannot use \cref{lemma:generalisation_to_K} to generalize the lemma above, because the properties ({\it i, ii}) in \cref{lemma:ac_all_parents} do not only concern the nodes $x_0, x_1, \dots, x_{n+1}$ but also their children.
Therefore, we prove the generalization in the corollary below.

\begin{corollary}
    \label{cor:ac_all_parents_general_K}
    Let $\G$ be a DAG with no active cycles 
    and let $T$ be a trail in $\G$ of the form \eqref{eq:simple_trail}.
    If this is a shortest trail between $x_0$ and $x_{n+1}$ activated by the empty set consisting of nodes in $K \subseteq \V$, then
    \begin{enumerate}[label=(\roman*)]
        \item $\ch{x_0} \cap \ch{x_{n+1}} \subseteq
        \bigcap_{i = 1}^{n} \ch{x_i}$;

        \item $\forall i = 1, \dots, n,$
        $x_i \notin \ch{x_0} \cap \ch{x_{n+1}}$.
    \end{enumerate}
\end{corollary}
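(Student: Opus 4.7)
The plan is to adapt the proof of \cref{lemma:ac_all_parents} directly, since \cref{lemma:generalisation_to_K} does not apply here (as already noted just before the statement, the conclusion involves the common child $v$, which need not lie on the trail). Claim (ii) follows immediately from (i): if $x_i \in \ch{x_0} \cap \ch{x_{n+1}} \subseteq \ch{x_i}$, then $x_i$ would be a child of itself, contradicting the no-loop assumption on $\G$. So the entire effort reduces to proving (i).

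For (i), I pick $v \in \ch{x_0} \cap \ch{x_{n+1}}$, assume toward contradiction that $v \notin \ch{x_i}$ for some $i \in \{1, \dots, n\}$, and define $l$ and $r$ exactly as in the proof of \cref{lemma:ac_all_parents}; both are well-defined because $x_0$ and $x_{n+1}$ are parents of $v$. The goal is then to show that the closed trail $v \la x_l \har \cdots \har x_r \ra v$ satisfies the three conditions of \cref{def:active_cycle}, which would contradict the assumption that $\G$ has no active cycle. Conditions (i) and (ii) of \cref{def:active_cycle} are immediate: $l < i < r$ forces the length to be at least one, and the sub-trail $x_l \har \cdots \har x_r$ inherits only diverging or serial connections from $T$. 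The non-trivial part is condition (iii), the absence of chords.

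Chords of the candidate active cycle split into three types. A chord between two nodes $x_j, x_k$ of the sub-trail is a chord of $T$ itself, and is ruled out by \cref{lemma:no_chords}, whose conclusion generalizes to shortest trails consisting of nodes in $K$ via \cref{lemma:generalisation_to_K} (as observed in the text just before this corollary). A chord $x_j \ra v$ with $l < j < r$ is excluded by the maximality of $l$ and the minimality of $r$. Finally, a chord $v \ra x_j$ with $l < j < r$ is ruled out by acyclicity: since the sub-trail $x_l \har \cdots \har x_r$ has no converging connection, it contains at most one diverging connection, located at a common ancestor $x_m$, and consequently all of its arrows point away from $x_m$. Thus if $j \leq m$ then $x_j \ra x_{j-1} \ra \cdots \ra x_l$ is a directed path, closing the cycle $v \ra x_j \ra \cdots \ra x_l \ra v$ in $\G$; and if $j \geq m$ the cycle closes symmetrically through $x_r$. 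The main obstacle I anticipate is justifying the first kind of chord carefully via the $K$-generalization of \cref{lemma:no_chords}, since ``no chords'' must be read as a trail property in the sense of \cref{def:trail_property} for \cref{lemma:generalisation_to_K} to apply.
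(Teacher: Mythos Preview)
Your argument is correct. It mirrors the proof of \cref{lemma:ac_all_parents} verbatim, with the single necessary change: when you rule out chords of the first kind (between two $x$-nodes on the sub-trail), you invoke the $K$-relativized version of \cref{lemma:no_chords} via \cref{lemma:generalisation_to_K}. That combination is legitimate and is in fact used by the paper itself in the first line of its own proof of this corollary, so your ``main obstacle'' is no obstacle at all. The cycle argument for chords $v \ra x_j$ is also fine; whether $x_m$ denotes the common ancestor of $T$ or of the sub-trail $x_l \har \cdots \har x_r$ makes no difference, since arrows in $T$ point away from the (unique) diverging node in either case.

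The paper takes a genuinely different route. Rather than re-running the proof of \cref{lemma:ac_all_parents}, it passes to the induced subgraph $\G^*$ on $K \cup \{x_0, x_{n+1}\} \cup \big(\ch{x_0} \cap \ch{x_{n+1}}\big)$ and applies \cref{lemma:ac_all_parents} itself as a black box in $\G^*$. The key observation there is that, by \cref{lemma:ac_all_parents}(ii) applied inside $\G^*$, any shortest empty-set-activated trail in $\G^*$ avoids $\ch{x_0} \cap \ch{x_{n+1}}$ and hence already consists of nodes in $K$; this forces $T$ to be a shortest such trail in $\G^*$, and then \cref{lemma:ac_all_parents}(i) in $\G^*$ gives the conclusion directly. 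The paper's approach is more modular (it reuses the full lemma rather than its proof), while yours is more transparent and avoids the somewhat delicate check that $T$ really is shortest in $\G^*$.
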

\begin{proof}
    ({\it ii}) is a straightforward consequence of ({\it i}).
    We now prove ({\it i}).
    Trail~\eqref{eq:simple_trail} is a shortest trail activated by the empty set consisting of nodes in $K$, therefore by combining \cref{lemma:generalisation_to_K,lemma:no_chords} it contains no chords.
    
    Let $\G^* = (\V^*, \E^*)$ be the subgraph induced by 
    $$
    \V^* = K \cup \{x_0, x_{n+1} \} 
    \cup \big( \ch{x_0} \cap \ch{x_{n+1}} \big).
    $$
    By \cref{lemma:ac_all_parents}({\it ii}), any shortest trail between $x_0$ and $x_{n+1}$ in $\G^*$ activated by the empty set must not contain a node in $\ch{x_0} \cap \ch{x_{n+1}} \cap K = \ch{x_0} \cap \ch{x_{n+1}}$.
    Therefore, any shortest trail between $x_0$ and $x_{n+1}$ in $\G^*$ activated by the empty set consists of nodes in $K$.

    Thus, the trail $T$ is a shortest trail in $\G^*$ activated by the empty set.
    Now, we can apply \cref{lemma:ac_all_parents} to the trail $T$ in $\G^*$ to find that indeed 
    $\ch{x_0} \cap \ch{x_{n+1}} \subseteq \big( \cap_{i=1}^n \ch{x_i} \cap K \big) \subseteq \cap_{i=1}^n \ch{x_i}$.
\end{proof}

The lemma below states that if $v_1 \ra v_2$ for some $v_1,v_2 \in \V$, then the existence of a trail between $v_1$ and $v_2$ activated by the empty set and starting with an arc pointing to $v_1$ implies the existence of a particular subgraph.

\begin{theorem}
\label{lemma:po_active_cycle1}
    Let $\G$ be a DAG with no active cycles and let $v_1,v_2\in \V$ such that $v_1\ra v_2$.
    Suppose that 
      \begin{equation}\label{trail:subtrail2}
        v_1\la x_1 \har \cdots \har x_n \har v_2 
    \end{equation}
    is a shortest trail activated by the empty set starting with an arc $v_1 \la x_1$.
    Assume that $n \geq 1$.
    Then, for all $i \in \{1, \dots, n\}$,
    $x_i \ra x_{i+1}$ with the convention that $x_{n+1} := v_2$,
    and for all $i \in \{2, \dots, n\}$,
    $v_1 \ra x_{i}$ .
    
    This means that $\G$ contains the subgraph below.
    $$
    \begin{tikzpicture}[scale=0.8, transform shape, node distance=1.2cm, state/.style={circle, font=\Large, draw=black}]
    \node (v1) {$v_1$};
    \node[right= of v1] (v2) {$v_2$};
    \node[above left= of v1] (x2) {$x_2$};
    \node[above right= of v2] (xn-1) {$x_{n-1}$};
    \node[left = of x2] (x1) {$x_1$};
    \node[right = of xn-1] (xn) {$x_n$};

    \begin{scope}[>={Stealth[length=4pt,width=3pt,inset=0pt]}]
    \path [->] (v1) edge node {} (v2);
    \path [->] (v1) edge[bend right=0] node {} (x2);
    \path [->] (v1) edge node {} (xn-1);
    \path [->] (v1) edge node {} (xn);
    \path [->] (xn) edge node {} (v2);
    \path [->] (x1) edge node {} (v1);
    \path [->] (x1) edge node {} (x2);
    \path [->] (xn-1) edge node {} (xn);
    \draw[loosely dotted, line width=0.4mm] (x2) -- (xn-1);
    \end{scope}
    \end{tikzpicture}
    $$ 
    Furthermore, the theorem also holds for shortest trails activated by the empty set and of the form 
    \begin{equation}
    \label{trail:subtrail1}
        v_1\la x_1 \har \cdots \har x_n \ra v_2
    \end{equation}
    with $n\geq 1$.
\end{theorem}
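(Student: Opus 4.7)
The plan is to prove the structural claim in two main steps---first showing the common ancestor of the trail is $x_1$ (which forces $x_1 \ra x_2 \ra \cdots \ra x_n \ra v_2$), and then showing $v_1 \ra x_i$ for $i \in \{2, \dots, n\}$ by induction---with both steps relying on two preliminary facts extracted from the minimality hypothesis. Fact~(A): for $i \geq 2$, the arc $x_i \ra v_1$ cannot exist, since otherwise $v_1 \la x_i \har x_{i+1} \har \cdots \har v_2$ would be a strictly shorter trail activated by the empty set whose first arc still points into $v_1$ (no converging connection is created at $x_i$). Fact~(B): the subtrail $x_1 \har x_2 \har \cdots \har x_n \har v_2$ is a shortest trail from $x_1$ to $v_2$ activated by the empty set consisting only of nodes in $K := \V \setminus \{v_1\}$, because otherwise prepending $v_1 \la x_1$ to any such shorter subtrail would contradict the minimality assumption; applying \cref{lemma:no_chords} via \cref{lemma:generalisation_to_K} then gives that this subtrail has no chord in $\G$.

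For the first step, \cref{lemma:trail_activ_emptyset} ensures that the trail has no converging connection, so by \cref{def:common_ancestor} a unique common ancestor $x_m$ exists. It cannot be $v_1$ (the first arc points into $v_1$) nor $v_2$ (otherwise $v_2 \ra x_n \ra \cdots \ra x_1 \ra v_1 \ra v_2$ would be a directed cycle). Assume for contradiction $x_m = x_j$ with $j \geq 2$, so that the trail is $v_1 \la x_1 \la \cdots \la x_j \ra \cdots \ra x_n \ra v_2$ and $v_2$ has $v_1$ and $x_n$ as parents. I would examine the would-be active cycle at $v_2$: Facts~(A), (B), and the DAG property (which forbids $v_2 \ra x_i$ via a directed path to $v_2$, and forbids $v_1 \ra x_i$ for $i \leq j$ via the directed path $x_j \ra \cdots \ra x_1 \ra v_1$) reduce the only possible chord to $v_1 \ra x_{i^*}$ with $i^* > j$. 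If no such chord exists we obtain a forbidden active cycle directly; otherwise, taking the smallest such $i^*$, I examine the would-be active cycle at $x_{i^*}$ with inner trail $x_{i^* - 1} \la \cdots \la x_j \ra \cdots \ra x_1 \ra v_1$, where the same analysis (any $v_1 \ra x_l$ with $j < l < i^*$ is excluded by minimality of $i^*$) rules out every chord and produces another forbidden active cycle. Hence $j = 1$.

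For the second step, the case $n = 1$ is vacuous. For the base case $i = 2$ with $n \geq 2$, the would-be active cycle $v_2 \la v_1 \la x_1 \ra x_2 \ra \cdots \ra x_n \ra v_2$ admits by the same chord analysis no chord except $v_1 \ra x_{i^*}$ for some smallest $i^* \geq 2$; if $i^* \geq 3$ the would-be active cycle at $x_{i^*}$ with inner trail $x_{i^* - 1} \la \cdots \la x_1 \ra v_1$ would be chord-free, a contradiction, so $i^* = 2$. For the inductive step, assuming $v_1 \ra x_2, \dots, v_1 \ra x_{k-1}$ with $3 \leq k \leq n$, I apply the argument to the cycle at $v_2$ with inner trail $v_1 \ra x_{k-1} \ra \cdots \ra x_n$ and obtain the smallest $l^* \in \{k, \dots, n\}$ with $v_1 \ra x_{l^*}$; if $l^* > k$, the would-be active cycle at $x_{l^*}$ with inner trail $x_{l^* - 1} \la \cdots \la x_{k-1} \la v_1$ (which uses $v_1 \ra x_{k-1}$) is chord-free by the minimality of $l^*$ together with Fact~(B), again contradicting the no-active-cycle hypothesis; hence $l^* = k$. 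The alternative form $v_1 \la x_1 \har \cdots \har x_n \ra v_2$ requires no separate treatment, since the first step already forces $x_n \ra v_2$.

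The main obstacle will be the careful bookkeeping in the chord enumeration: for each would-be active cycle one has to verify that every candidate chord is ruled out by one of four mechanisms---the DAG property (a directed cycle would appear), the minimality of the original trail (an even shorter trail starting with an arc into $v_1$ would appear, using arguments analogous to Fact~(A)), Fact~(B) (non-consecutive nodes of the subtrail cannot be adjacent), or the minimality of the inductively-chosen $i^*$ or $l^*$. Once this classification is set up cleanly, the induction proceeds essentially mechanically.
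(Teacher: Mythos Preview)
Your proposal is correct and follows essentially the same route as the paper: exclude chords on the would-be active cycle via minimality, the DAG property, and \cref{lemma:no_chords}, then use the no-active-cycle hypothesis to force $x_m = x_1$ and subsequently the arcs $v_1 \ra x_i$. You are in fact more careful than the paper in two places: you justify Fact~(B) via \cref{lemma:generalisation_to_K} with $K = \V \setminus \{v_1\}$ (the paper applies \cref{lemma:no_chords} directly, tacitly assuming the subtrail $x_1 \har \cdots \har v_2$ is globally shortest rather than merely shortest among trails avoiding $v_1$), and you make explicit the induction that yields $v_1 \ra x_i$ for all $i \geq 2$, which the paper compresses into the single sentence following the conclusion $x_m = x_1$.
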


\begin{proof}
    Consider a shortest trail of the form (\ref{trail:subtrail2}) activated by the empty set with $n \geq 1$.

    Consider the case when $n=1$.
    Here, the trail takes the form $v_1 \la x_1 \har v_2$ with $v_1 \ra v_2$.
    If $x_1 \la v_2$, then we obtain the cycle $v_1 \la x_1 \la v_2 \la v_1$, and therefore a contradiction.
    Consequently, the arc $x_1 \ra v_2$ must be present, giving us exactly the claimed subgraph, completing the proof.\\
    Now, let us assume that $n>1$.
    We first show that $x_n \ra v_2$.
    Suppose that $v_2 \ra x_n$, then the trail takes the form 
    $$
    v_1\la x_1 \har \cdots \har x_n \la v_2. 
    $$
    Since this trail is activated by the empty set it contains no converging connections (\cref{lemma:trail_activ_emptyset}).
    Hence, the trail must take the form
    $$
    v_1 \la x_1 \la \cdots \la x_n \la v_2.
    $$
    However, since $v_1 \ra v_2$, then we get a cycle and a contradiction.
    So, we get that $x_n \ra v_2$.\\
    Since it must be that $x_n \ra v_2$, the shortest trails of the form \eqref{trail:subtrail1} and of the form \eqref{trail:subtrail2} coincide.\\
    Let $x_m$ be the common ancestor in trail \eqref{trail:subtrail2}, see \cref{def:common_ancestor}. Now, $\G$ contains the subgraph in \cref{Fig:commdesc}.
   
    \begin{minipage}{0.45\textwidth}
    \begin{figure}[H]
      \centering
    \begin{tikzpicture}[scale=0.8, transform shape, node distance=0.7cm, state/.style={circle, font=\Large, draw=black}]
    \node (v1) {$v_1$};
    \node[right = of v2] (v2) {$v_2$};
    \node (help) at ($(v1)!0.5!(v2)$) {};
    \node[above = of help] (xm) {$x_m$};
    \node[left = of xm] (xm-1) {$x_{m-1}$};
    \node[left = of xm-1] (x2) {$x_2$};
    \node[left = of x2] (x1) {$x_1$};
    \node[right = of xm] (xm+1) {$x_{m+1}$};
    \node[right = of xm+1] (xn-1) {$x_{n-1}$};
    \node[right = of xn-1] (xn) {$x_n$};

    \begin{scope}[>={Stealth[length=4pt,width=3pt,inset=0pt]}]
    \path[->] (v1) edge node {} (v2);
    \path[->] (x1) edge node {} (v1);
    \path[->] (x2) edge node {} (x1);
    \path[->] (xm) edge node {} (xm-1);
    \path[->] (xm) edge node {} (xm+1);
    \path[->] (xn-1) edge node {} (xn);
    \path[->] (xn) edge node {} (v2);

    \draw[loosely dotted, line width=0.4mm] (x2) -- (xm-1);
    \draw[loosely dotted, line width=0.4mm] (xm+1) -- (xn-1);
    
    \end{scope}
    \end{tikzpicture}
    \caption{Subgraph of $G$ with common descendant.}\label{Fig:commdesc}
    \end{figure}
    \end{minipage}\hfill
    \begin{minipage}{0.45\textwidth}
    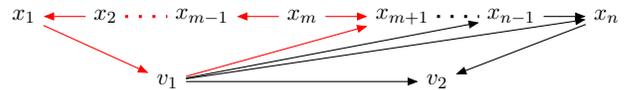
\begin{figure}[H]
      \centering
    \begin{tikzpicture}[scale=0.8, transform shape, node distance=0.7cm, state/.style={circle, font=\Large, draw=black}]
    \node (v1) {$v_1$};
    \node[right = of v2] (v2) {$v_2$};
    \node (help) at ($(v1)!0.5!(v2)$) {};
    \node[above = of help] (xm) {$x_m$};
    \node[left = of xm] (xm-1) {$x_{m-1}$};
    \node[left = of xm-1] (x2) {$x_2$};
    \node[left = of x2] (x1) {$x_1$};
    \node[right = of xm] (xm+1) {$x_{m+1}$};
    \node[right = of xm+1] (xn-1) {$x_{n-1}$};
    \node[right = of xn-1] (xn) {$x_n$};

    \begin{scope}[>={Stealth[length=4pt,width=3pt,inset=0pt]}]
    \path[->] (v1) edge node {} (v2);
    \path[->] (x1) edge[red] node {} (v1);
    \path[->] (x2) edge[red] node {} (x1);
    \path[->] (xm) edge[red] node {} (xm-1);
    \path[->] (xm) edge[red] node {} (xm+1);
    \path[->] (xn-1) edge node {} (xn);
    \path[->] (xn) edge node {} (v2);
    
    \path[->] (v1) edge[red] node {} (xm+1);
    \path[->] (v1) edge node {} (xn-1);
    \path[->] (v1) edge node {} (xn);

    \draw[loosely dotted, line width=0.4mm, red] (x2) -- (xm-1);
    \draw[loosely dotted, line width=0.4mm] (xm+1) -- (xn-1);
    
    \end{scope}
    \end{tikzpicture}
    \caption{Subgraph of $G$ with common descendant and chords.}\label{Fig:commdesccords}
\end{figure}
    \end{minipage}
    
    The subgraph above contains an undirected cycle with one converging connection (at $v_2$) hence the appropriate chords must be present. Several chords can be excluded:
    \begin{itemize}
        \item The trail $x_1 \har \cdots \har x_n \ra v_2$ is a shortest trail activated by the empty set, and therefore by \cref{lemma:no_chords} it has no chords.
        \item $v_1 \ra x_j$ with $j \leq m$ results in a cycle.
        \item $x_j \ra v_1$ with $j \in \{2, \dots, m\}$ results in a trail
        $v_1\la x_j \har \cdots \har x_n \ra v_2$
        which would be shorter than the shortest trail \eqref{trail:subtrail2} (while still being activated by the empty set).
        This is a contradiction.
    \end{itemize}
    The only remaining chords are of the form $v_1 \ra x_i$ with $i\in \{m+1,\dots,n \}$.
    First, we show that the diverging node $x_m$ must be the first node on trail~\eqref{trail:subtrail2}, i.e. $x_m = x_1$.
    To see this, we consider the case where all possible chords are present in $\E$, giving us the subgraph in \cref{Fig:commdesccords}.
    This graph contains an undirected cycle, coloured in red.
    
This undirected cycle is an active cycle, unless it is of length strictly smaller than 4.
    Thus, $\G$ must contain the subgraph as given by the theorem, completing the proof.    
\end{proof}

Similarly to the previous theorem, the theorem below states that under certain conditions the existence of a trail between two nodes $v_1$ and $v_2$ activated by the empty set implies the existence of a specific subgraph.
In this case, the conditions state that $v_1$ and $v_2$ are both parents of another node $v_3$ and the last arc along the trail between $v_1$ and $v_2$ points towards $v_2$. Moreover, no node on the trail can be a parent of $v_3$.

\begin{theorem}\label{lemma:v1_trail_to_v2_parents_of_v3}
    Let $\G$ be a DAG with no active cycles
    and let $v_1,v_2,v_3 \in \V$ such that $v_1, v_2\in \pa{v_3}$. 
    Suppose that $v_1$ and $v_2$ are connected by a trail
    \begin{equation}\label{trail:trail_start}
        v_1 \har x_1 \har \cdots \har x_n \ra v_2
    \end{equation}
    activated by the empty set with $\{x_i \}_{i=1}^n \cap \pa{v_3}=\emptyset$ and $n \geq 1$.
    If this is a shortest such trail, then $\G$ contains the subgraph below, with the convention $x_0:=v_1$.
    
   $$
    \begin{tikzpicture}[scale=0.8, transform shape, node distance=1.2cm, state/.style={circle, font=\Large, draw=black}]
    \node (v3) {$v_3$};
    \node[above left = of v3] (v1) {$v_1$};
    \node[above right = of v3] (v2) {$v_2$};
    \node (help) at ($(v1)!0.5!(v2)$) {};
    \node[above = of help] (xm) {};
    \node[left  = of xm] (xm-1) {};
    \node[left = of xm-1] (x2) {$x_2$};
    \node[left = of x2] (x1) {$x_1$};
    \node[right = of xm] (xm+1) {};
    \node[right = of xm+1] (xn-1) {$x_{n-1}$};
    \node[right = of xn-1] (xn) {$x_n$};

    \begin{scope}[>={Stealth[length=4pt,width=3pt,inset=0pt]}]
        \path [->] (v1) edge node[scale=0.8, above left] {} (v3);
         \path [->] (v2) edge node[scale=0.8, above left] {} (v1);
        \path [->] (v2) edge node[scale=0.8, above left] {} (v3);
        \path [->] (v2) edge node[scale=0.8, above left] {} (x2);
        \path [->] (v2) edge node[scale=0.8, above left] {} (xn-1);
        \path [->] (xn) edge node[scale=0.8, above left] {} (v2);
        \path [->] (x2) edge node {} (x1);
        \path [->] (xn) edge node {} (xn-1);
        \path [->] (x1) edge node {} (v1);

        \draw[loosely dotted, line width=0.4mm] (x2) -- (xn-1);
    \end{scope}
    \end{tikzpicture}
    $$
\end{theorem}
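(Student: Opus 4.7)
The plan is to carry out an active-cycle analysis centered on a cycle through $v_3$, establish the key chord $v_2 \to v_1$, and then reduce the remaining structural claims to an application of \cref{lemma:po_active_cycle1} on the reversed trail.

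First, I would analyze the undirected cycle
\[
v_3 \leftarrow v_1 \har x_1 \har \cdots \har x_n \to v_2 \to v_3.
\]
By \cref{lemma:trail_activ_emptyset} the internal trail $v_1 \har x_1 \har \cdots \har x_n \to v_2$ contains no converging connection, so this cycle fits the pattern of \cref{def:active_cycle} with $v_3$ playing the role of ``$v$'' and $v_1, v_2$ as its parents; it would be an active cycle unless at least one chord is present. I would enumerate all candidate chords and rule them out: chords between $v_3$ and some $x_i$ are forbidden by $\{x_i\}_{i=1}^n \cap \pa{v_3} = \emptyset$ together with acyclicity, since each $x_i$ is already an ancestor of $v_3$ along the trail; chords $x_i \to v_1$, $x_i \to v_2$, or $v_1 \to v_2$ produce a strictly shorter trail in $\TRAILSbig{v_1}{v_2}{\emptyset}$ still ending with $\to v_2$ and with intermediates outside $\pa{v_3}$, contradicting minimality; chords $v_1 \to x_i$ either close a directed cycle through the existing directed sub-paths of the trail, or else again yield a shorter admissible trail. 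The only surviving candidates are $v_2 \to v_1$ and $v_2 \to x_i$ for indices $i$ to the left of the common ancestor $x_m$ of \cref{def:common_ancestor}. A minimality-of-index argument then forces $v_2 \to v_1 \in \E$: if $v_2 \not\to v_1$, I pick the smallest $i^*$ with $v_2 \to x_{i^*}$ and consider the smaller cycle $v_3 \leftarrow v_1 \leftarrow x_1 \leftarrow \cdots \leftarrow x_{i^*} \leftarrow v_2 \to v_3$, whose chord candidates (by the same enumeration) reduce to $v_2 \to v_1$ (excluded by assumption) and $v_2 \to x_i$ for $i < i^*$ (excluded by the choice of $i^*$), leaving no chord and so giving an active cycle, a contradiction. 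The same enumeration also rules out the ``all-rightward'' trail $v_1 \to x_1 \to \cdots \to x_n \to v_2$, because in that case $v_2 \to v_1$ would close a directed cycle and no other chord survives.

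Second, with $v_2 \to v_1$ established, I would apply \cref{lemma:po_active_cycle1} to the reversed trail $v_2 \leftarrow x_n \har \cdots \har x_1 \har v_1$, which is a shortest trail from $v_2$ to $v_1$ activated by $\emptyset$ starting with $v_2 \leftarrow x_n$, whose intermediate nodes lie in $K := \V \setminus \pa{v_3}$. \cref{lemma:generalisation_to_K} allows me to invoke \cref{lemma:po_active_cycle1} in this $K$-restricted setting; under the correspondence $(v_1', v_2', x'_i) \leftrightarrow (v_2, v_1, x_{n+1-i})$, its conclusion yields both the trail arc directions $x_j \to x_{j-1}$ for $j = 1, \ldots, n$ (with $x_0 := v_1$, so the trail takes the shape $v_1 \leftarrow x_1 \leftarrow \cdots \leftarrow x_n \to v_2$) and the chords $v_2 \to x_j$ for $j \in \{1, \ldots, n-1\}$. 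Combined with the given $v_1 \to v_3, v_2 \to v_3$ and the already-derived $v_2 \to v_1$, $x_1 \to v_1$, $x_n \to v_2$, this is exactly the subgraph displayed in the theorem.

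The main obstacle is the exhaustive chord enumeration of the first stage: every non-adjacent pair in the cycle must be examined in both directions and dismissed by the correct reason (directed cycle, shorter admissible trail, or the $v_3$-parent assumption), with the notion of ``shorter'' respecting the three simultaneous constraints defining the original trail (activation by $\emptyset$, ending arc $\to v_2$, and intermediates in $K$). A secondary care-point is verifying that the minimality condition transfers cleanly under reversal so that \cref{lemma:po_active_cycle1} can legitimately be invoked in the second stage.
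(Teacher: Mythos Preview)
Your proof is correct and takes a genuinely different route from the paper's. Both arguments open with the same chord enumeration on the cycle $v_3 \la v_1 \har x_1 \har \cdots \har x_n \to v_2 \to v_3$, narrowing the surviving candidates to $v_2 \to v_1$ and $v_2 \to x_i$ with $i < m$. At that point the paper argues directly that the common ancestor satisfies $m = n$: it considers the sub-cycle $v_2 \to x_{m-1} \la x_m \to \cdots \to x_n \to v_2$ (the ``red'' cycle obtained when all admissible chords are added), observes that no further chord can break it, and forces its length below~$4$; from $m = n$ the paper then asserts the full subgraph, leaving the presence of each individual chord $v_2 \to x_i$ and of $v_2 \to v_1$ implicit. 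You instead first isolate the single arc $v_2 \to v_1$ via a smallest-index sub-cycle through $v_3$, and then delegate the entire remaining structure (the orientations $x_j \to x_{j-1}$ and the chords $v_2 \to x_j$) to \cref{lemma:po_active_cycle1} applied to the reversed trail, using \cref{lemma:generalisation_to_K} to accommodate the restriction $K = \V \setminus \pa{v_3}$. Your route is more modular and in fact makes explicit the iterative chord-forcing that the paper compresses into one line; the paper's route is self-contained but asks the reader to supply that iteration. Both are sound, and your reversal-minimality check (that shortest trails ending in $\to v_2$ with intermediates in $K$ correspond bijectively to shortest trails from $v_2$ starting with $\la$ with intermediates in $K$) is exactly the point that legitimizes invoking \cref{lemma:po_active_cycle1} here.
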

\begin{proof}
    Let us use the convention $x_{n+1} = v_2$.
    Let $x_m$ be the common ancestor of nodes in trail \eqref{trail:trail_start}, see \cref{def:common_ancestor}. Then, $\G$ contains the subgraph, that we will call $A$, displayed in \cref{Fig:commdescendant}.

    \begin{minipage}{0.45\textwidth}
    \begin{figure}[H]
      \centering
    \begin{tikzpicture}[scale=0.8, transform shape, node distance=0.7cm, state/.style={circle, font=\Large, draw=black}]
    \node (v3) {$v_3$};
    \node[above left = of v3] (v1) {$v_1$};
    \node[above right = of v3] (v2) {$v_2$};
    \node (help) at ($(v1)!0.5!(v2)$) {};
    \node[above = of help] (xm) {$x_m$};
    \node[left  = of xm] (xm-1) {$x_{m-1}$};
    \node[left = of xm-1] (x2) {$x_2$};
    \node[left = of x2] (x1) {$x_1$};
    \node[right = of xm] (xm+1) {$x_{m+1}$};
    \node[right = of xm+1] (xn-1) {$x_{n-1}$};
    \node[right = of xn-1] (xn) {$x_n$};
    
    \begin{scope}[>={Stealth[length=4pt,width=3pt,inset=0pt]}]
    \path [->] (v1) edge node[scale=0.8, above left] {} (v3);
    \path [->] (v2) edge node[scale=0.8, above left] {} (v3);
    \path [->] (xn) edge node[scale=0.8, above left] {} (v2);
    \path [->] (xm) edge node {} (xm+1);
    \path [->] (xm) edge node {} (xm-1);
    \path [->] (x2) edge node {} (x1);
    \path [->] (xn-1) edge node {} (xn);
    \path [->] (x1) edge node {} (v1);
    \end{scope}
    
    \draw[loosely dotted, line width=0.4mm] (x2) -- (xm-1);
    \draw[loosely dotted, line width=0.4mm] (xn-1) -- (xm+1);
    \end{tikzpicture}.
    \caption{Subgraph $A$ with common descendant.}\label{Fig:commdescendant}
    \end{figure}
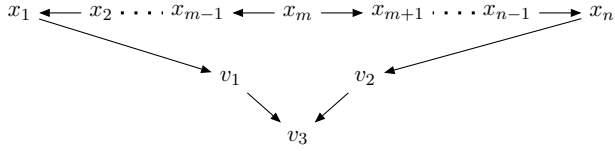
    \end{minipage}\hfill
     \begin{minipage}{0.45\textwidth}
    \begin{figure}[H]
      \centering
\begin{tikzpicture}[scale=0.8, transform shape, node distance=0.7cm, state/.style={circle, font=\Large, draw=black}]
    \node (v3) {$v_3$};
    \node[above left = of v3] (v1) {$v_1$};
    \node[above right = of v3] (v2) {$v_2$};
    \node (help) at ($(v1)!0.5!(v2)$) {};
    \node[above = of help] (xm) {$x_m$};
    \node[left  = of xm] (xm-1) {$x_{m-1}$};
    \node[left = of xm-1] (x2) {$x_2$};
    \node[left = of x2] (x1) {$x_1$};
    \node[right = of xm] (xm+1) {$x_{m+1}$};
    \node[right = of xm+1] (xn-1) {$x_{n-1}$};
    \node[right = of xn-1] (xn) {$x_n$};
    \begin{scope}[>={Stealth[length=4pt,width=3pt,inset=0pt]}]
        \path [->] (v1) edge node[scale=0.8, above left] {} (v3);
        \path [->] (v2) edge node[scale=0.8, above left] {} (v3);
        \path [->] (xn) edge[red] node[scale=0.8, above left] {} (v2);
        \path [->] (xm) edge[red] node {} (xm+1);
        \path [->] (xm) edge[red] node {} (xm-1);
        \path [->] (x2) edge node {} (x1);
        \path [->] (xn-1) edge[red] node {} (xn);
        \path [->] (x1) edge node {} (v1);
        \path [->] (v2) edge node {} (x1);
        \path [->] (v2) edge node {} (x2);
        \path [->] (v2) edge[red] node {} (xm-1);
        \path [->] (v2) edge node {} (v1);
    \end{scope}  
    \draw[loosely dotted, line width=0.4mm] (x2) -- (xm-1);
    \draw[loosely dotted, line width=0.4mm, red] (xn-1) -- (xm+1);
        \end{tikzpicture}
    \caption{Subgraph with common descendant and chords.}\label{Fig:commdescendantcords}
    \end{figure}
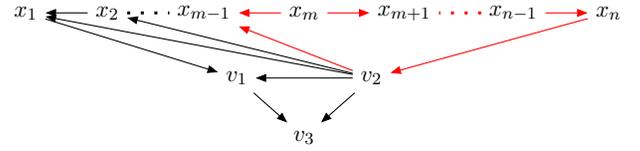
    \end{minipage}\hfill
    
    The graph $A$ after removing directions is a cycle. It has one converging connection (at $v_3$).
    Since $\G$ does not contain an active cycle, $A$ must contain the appropriate chords.
    Several chords can be excluded:
    \begin{itemize}
        \item The trail $v_1 \har x_1 \har \cdots \har x_n$ is a shortest trail activated by the empty set, and therefore it has no chords by \cref{lemma:no_chords}.
        \item $v_1 \ra v_2$ results in a shorter trail of the form \eqref{trail:trail_start}.
        \item $x_i \ra v_2$ results in a shorter trail of the form \eqref{trail:trail_start}.
        \item $v_2 \ra x_i$ with $i \geq m$ results in a cycle.
        \item $v_3 \ra x_i$ with $i=1,\dots,n$ results in a cycle.
        \item $x_i \ra v_3$ with $i=1,\dots,n$ cannot be present by the assumptions of the lemma.
    \end{itemize}
    Hence, the only possible chords are arcs of the form $v_2 \ra v_1$ and $v_2 \ra x_i$ with $i\in \{1,\dots,m-1 \}$.\\
First, we show that the common ancestor must be the last node in the trail, i.e. $x_m = x_n$.
    Consider the case where all possible chords are present in $A$, giving us the subgraph in \cref{Fig:commdescendantcords} with chords and undirected cycle containing one converging connection (at $x_{m-1}$) coloured in red. Since $\G$ cannot contain an active cycle, and there are no more arcs which could act as a chord, the length of this undirected cycle  must be strictly smaller than 4.
    Therefore $m = n$ and $x_m := x_n$ and we get that $\G$ contains the subgraph given by the lemma, completing the proof.
\end{proof}

In the next section more general types of trails are considered.
These are trails of the type (\ref{eq:full_trail}). The results that we presented in this section
will be applied to sub-trails of such more general trails.
The trails studied in \cref{lemma:po_active_cycle1} correspond to the first two sub-trails of trail~\eqref{eq:full_trail}, whereas the trails studied in \cref{lemma:v1_trail_to_v2_parents_of_v3} can be seen as sub-trails between the $c_i$'s.

\section{Properties of trails that may have converging connections}
\label{sec:proof:properties_trails_with_converging}

We will now prove some interesting properties of a minimal trail in $\TRAILSbig{X}{Y}{Z}$ with respect to $\smallerTRAIL$.

\begin{theorem}
\label{thm:minimal_trails}
    Let $X,Y,Z \subseteq \V$ be three disjoint subsets.
    Assume that $\TRAILSbig{X}{Y}{Z} \neq \emptyset$ and
    \begin{equation}
        x \har t^{0}_1 \har \cdots \har t^{0}_{\n{t}{0}} \ra 
        c_1 \la 
        \la \cdotslong \ra
        c_C \la t^{C}_1 \har \cdots \har t^{C}_{\n{t}{C}} \har y.
    \label{trail:lemma_XYZ_Cs}
    \end{equation}
    be a minimal element of $\TRAILSbig{X}{Y}{Z} $
    with respect to the order $\smallerTRAIL$.

    Then, the following properties hold:
    \begin{itemize}
        \item[(i)] For all $i,j$, $t^{i}_j \notin X \sqcup Y \sqcup Z$
        and $d^{i}_j \notin X \sqcup Y \sqcup Z$.
        \label{prop:nodes_along_subtrails}

        \item[(ii)] For all $i=1,\dots,C$, the trails $c_i \ra d^i_1 \ra \cdots \ra d^i_n \ra Z(c_i)$ and $t^i_1 \har \cdots \har t^i_n$ do not contain a chord.
        Furthermore, the trails $x \har t^0_1 \har \cdots \har t^0_n$ and
        $t^C_1 \har \cdots \har t^C_n \har y$ do not contain a chord.
        \label{prop:no_chords}

        \item[(iii)] If $c_i \ra c_{i+1}$ and $c_{i+1} \in Z$, then $c_i \in Z$.
        \label{prop:rightarrow_in_Z}
        
        \item[(iv)] If $c_i \la c_{i+1}$ and $c_{i} \in Z$, then $c_{i+1} \in Z$.
        \label{prop:leftarrow_in_Z}

        \item[(v)] For all $i=1,\dots,C-1$, the $i$-th subtrail is a shortest trail between $c_i$ and $c_{i+1}$ 
        starting with a leftward pointing arrow,
        ending with rightward pointing arrow,
        consisting of nodes in $\V \setminus Z$
        and with no converging connection.
        The $C$-th subtrail is a shortest trail between $c_C$ and $y$
        starting with a leftward pointing arrow,
        consisting of nodes in $\V \setminus Z$
        and with no converging connection.
        \label{prop:shortest_subtrails}
    \end{itemize}
\end{theorem}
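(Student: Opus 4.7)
The plan is to establish each of (i)--(v) by a contradiction argument, assuming the property fails and constructing a strictly smaller element $T' \in \TRAILSbig{X}{Y}{Z}$ with respect to $\smallerTRAIL$.

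For (i), no $t^i_j$ is a converging connection of $T$, so $t^i_j \in Z$ would immediately block $T$, contradicting $T \in \TRAILSbig{X}{Y}{Z}$. If $t^i_j \in X$ (respectively $Y$), the suffix (respectively prefix) of $T$ at $t^i_j$ is already in $\TRAILSbig{X}{Y}{Z}$; for $i \geq 1$ it has strictly fewer converging connections, giving $T' \smallerTRAIL T$ by condition~\ref{cond:least_converging}, while for $i = 0$ it shares the same converging structure but a strictly smaller total subtrail length, giving $T' \smallerTRAIL T$ by condition~\ref{cond:shortest}. For $d^i_j$ with $1 \leq j \leq \n{Z}{i}$, the relation $d^i_j \notin Z$ is the defining property of a descendant path. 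If $d^i_j \in X$, traverse the descendant path of $c_i$ backwards from $d^i_j$ to $c_i$ and then continue along $T$; in the resulting trail, $c_i$ becomes serial (and is not in $Z$, since it had a nontrivial descendant path), so the converging connections reduce to $c_{i+1}, \dots, c_C$ and $T' \smallerTRAIL T$ by condition~\ref{cond:least_converging_not_Z} or~\ref{cond:least_converging}; the case $d^i_j \in Y$ is symmetric.

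For (v), if the $i$-th subtrail ($i \in \{1, \dots, C\}$) were not a shortest trail of the stated form, replace it in $T$ by such a shorter trail $S_i'$. Since $S_i'$ consists only of nodes in $\V \setminus Z$ and has no converging connection, the modified trail $T'$ is still activated by $Z$, and because the converging connections and descendant paths are untouched, only the total subtrail length strictly decreases, so $T' \smallerTRAIL T$ by condition~\ref{cond:shortest}. Property (ii) then follows: for a descendant path of $c_i$, a forward chord between two of its nodes would produce a strictly shorter descendant path, contradicting minimality of $\n{Z}{i}$, while a reverse chord would produce a directed cycle, contradicting acyclicity of $\G$. For each subtrail, property (v) identifies it as a shortest trail activated by the empty set via \cref{lemma:trail_activ_emptyset}, so \cref{lemma:no_chords} combined with \cref{lemma:generalisation_to_K} (applied with $K = \V \setminus Z$) rules out chords.

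Properties (iii) and (iv) are handled by direct surgery. For (iii), suppose $c_i \ra c_{i+1}$, $c_{i+1} \in Z$, and $c_i \notin Z$; replace the $i$-th subtrail of $T$ by the single arc $c_i \ra c_{i+1}$. In the new trail, $c_i$ becomes a serial connection and, being outside $Z$, does not block, while $c_{i+1}$ remains converging and its closest descendant is still in $Z$; thus $T' \in \TRAILSbig{X}{Y}{Z}$ has one fewer converging connection not in $Z$, giving $T' \smallerTRAIL T$ by condition~\ref{cond:least_converging_not_Z}, a contradiction. Property (iv) follows by the symmetric surgery replacing the $i$-th subtrail with the arc $c_{i+1} \ra c_i$. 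The main obstacle throughout is certifying that each surgically constructed $T'$ is a genuine trail with no repeated nodes; this can be addressed by noting that if any repetition arises, one can extract an activated sub-walk whose order parameters are no larger than those of $T'$, still providing the required contradiction.
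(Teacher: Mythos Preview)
Your proposal is correct and follows essentially the same contradiction strategy as the paper: for each item you exhibit a strictly $\smallerTRAIL$-smaller trail in $\TRAILSbig{X}{Y}{Z}$, and your final remark about extracting a genuine trail from a walk with repeated nodes is in fact more careful than the paper, which leaves this implicit.

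One small wrinkle in your derivation of (ii) from (v): property (v) as stated covers only the subtrails for $i=1,\dots,C$, so it says nothing about the $0$-th subtrail $x \har t^0_1 \har \cdots \har t^0_n$; and the ``leftward start / rightward end'' constraints in (v) do not literally match the hypothesis of \cref{lemma:no_chords}, so the combination with \cref{lemma:generalisation_to_K} does not apply verbatim (e.g.\ a chord $c_i \ra t^i_j$ with $j$ past the common ancestor would shorten the subtrail but violate the leftward-start constraint). The paper sidesteps both issues by applying \cref{lemma:no_chords} and \cref{lemma:generalisation_to_K} to the \emph{inner} portion $t^i_1 \har \cdots \har t^i_n$, which is a genuine shortest trail activated by the empty set among nodes of $\V\setminus(Y\sqcup Z)$, and then treats $x \har t^0_1 \har \cdots \har t^0_n$ by the same direct argument. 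This is a cosmetic fix to your argument, not a structural one.
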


\begin{proof}
    \textbf{\ref{prop:nodes_along_subtrails}(i):}
    We want to show that: for all $i,j$, 
    $t^{i}_j \notin Y \sqcup Z$ and 
    $d^{i}_j \notin Y \sqcup Z$.
    This will also show that $t^{i}_j \notin X$ and $d^{i}_j \notin X$ by symmetry.
    
    \medskip

    Assume that there exist $i,j$ such that $t^{i}_j \in Y \sqcup Z$.
    Remark first that $t^{i}_j \notin Z$,
    otherwise trail \eqref{trail:lemma_XYZ_Cs}
    would be blocked by $Z$
    (see \cref{def:graph_dsep}).
    Hence, $t^{i}_j$ must be in $Y$.
    Now, $x \har \cdots \har t^{i}_j$ is a trail from $x$ to an element of $Y$ activated by $Z$ that is smaller than \eqref{trail:lemma_XYZ_Cs} (see \cref{def:better_trail}). This contradicts the assumption that \eqref{trail:lemma_XYZ_Cs} is minimal.
    We have shown that
    $t^{i}_j \notin Y \sqcup Z$.

    Now, suppose that there exist $i,j$ such that $d^{i}_j \in Y \sqcup Z$.
    By \cref{def:closest_descendant}, this node cannot be in $Z$.
    Therefore, $d^{i}_j$ must be in $Y$.
    If this is the case, then the trail
    $$
    x \har \cdots \ra c_i \ra d^i_1 \ra \cdots \ra d^i_j
    $$
    would be a smaller trail in $\TRAILSbig{X}{Y}{Z}$ than \eqref{trail:lemma_XYZ_Cs}.
    Indeed, it contains at least one fewer converging node in $Z$, since the node $c_i$ now corresponds to a serial connection.
    This contradicts the assumption that \eqref{trail:lemma_XYZ_Cs} is minimal and concludes   the proof of \ref{prop:nodes_along_subtrails}(i).
    \medskip

    \textbf{\ref{prop:no_chords}(ii):}
    We want to show that, for all $i=1,\dots,C$, the trails
    $c_i \ra d^i_1 \ra \cdots \ra d^i_n \ra Z(c_i)$,
    $t^i_1 \har \cdots \har t^i_n$ and 
    $x \har t^0_1 \har \cdots \har t^0_n$ as well as
    $t^C_1 \har \cdots \har t^C_n \har y$ do not contain a chord.
    
    \medskip

    First, we consider a descendant path between $c_i$ and $Z(c_i)$ with $i \in \{1, \dots, C \}$.
    By \cref{def:closest_descendant}, this path is a shortest trail of the form
    $$
    c_i \ra d_1 \ra \cdots \ra d_n \ra Z(c_i)
    $$
    consisting of nodes in $\V \setminus Z$.
    By combining \cref{lemma:generalisation_to_K,lemma:no_chords}
    we know that this descendant path does not contain a chord.

    Now, let $i \in \{1, \dots, C \}$ and consider the subtrail
    $$
    c_i \la t_1 \har \cdots \har t_n \ra c_{i+1}.
    $$
    Observe that the trail $t_1 \har \cdots \har t_n$ is a shortest trail between $t_1$ and $t_n$ with no converging connections consisting of nodes in $\V \setminus \big( Z \sqcup Y \big)$.
    Indeed, if there would be a shorter such trail $T^*$ between $t_1$ and $t_n$, then replacing $t_1 \har \cdots \har t_n$ in \eqref{trail:lemma_XYZ_Cs} by $T^*$ would result in a smaller trail than \eqref{trail:lemma_XYZ_Cs}, and therefore a contradiction.
    Now, we can apply \cref{lemma:generalisation_to_K,lemma:no_chords}
    to find that $t_1 \har \cdots \har t_n$ cannot contain a chord.

    Consider the subtrail
    $$
    x \har t^0_1 \har \cdots \har t^0_n \ra c_1.
    $$
    By similar argument as above, the trail
    $x \har t^0_1 \har \cdots \har t^0_n$
    is a shortest trail activated by the empty set consisting of nodes in $V \setminus \big( Y \sqcup Z\big)$.
    Thus, we can apply \cref{lemma:generalisation_to_K,lemma:no_chords} to find that $x \har t^0_1 \har \cdots \har t^0_n$ contains no chords.

    The last trail does not contain a chord by symmetry: switch the role of $X$ and $Y$ and apply this result.
    This concludes the poof of \ref{prop:no_chords}(ii).
    \medskip

    \textbf{\ref{prop:rightarrow_in_Z}(iii):}
    We want to prove that, if $c_i \ra c_{i+1}$ and $c_{i+1} \in Z$, then $c_i \in Z$.
    \medskip
    
    Consider the case when $c_i \notin Z$, then the trail
    $
    x \har \cdots \ra c_i \ra c_{i+1} \la \cdots \har y
    $
    would be a smaller trail than \eqref{trail:lemma_XYZ_Cs} as it contains one fewer converging connections.
    This contradiction implies that we have $c_i \in Z$.
    \medskip

    \textbf{\ref{prop:leftarrow_in_Z}(iv):}
    This is a direct consequence of \ref{prop:rightarrow_in_Z}(iii) obtained by switching the roles of $X$ and $Y$.
    \medskip

    \textbf{\ref{prop:shortest_subtrails}(v):}
    We want to prove that, for all $i=1,\dots,C$, the trail $c_i \la t^i_1 \har t^{i}_2 \har \cdots \har t^{i}_{n-1} \har t^i_n \har c_{i+1}$ is a shortest such trail.
    \medskip

    This follows directly form the definition of \eqref{trail:lemma_XYZ_Cs}.
    If there would be a shorter trail $T^*$ between $c_i$ and $c_{i+1}$, then replacing the corresponding subtrail in \eqref{trail:lemma_XYZ_Cs} by $T^*$ would result in a smaller trail, and therefore a contradiction.    
\end{proof}

\medskip
In the further considerations we add an extra assumption on the subset $Y \sqcup Z$ of $\V$ and show additional properties of minimal trails in $\TRAILSbig{X}{Y}{Z}$ under this constraint on $Y$ and $Z$.
The additional assumption is motivated by the application of the results presented in this paper to copula-based BN models.
These models are restricted not to contain certain graphical structures which allow them to be computationally efficient \cite{AHK_PCBN_2024}.
In PCBNs the parents of each node $v\in \V$ are sorted in a particular manner.
This is equivalent to creating a sequence of ordered subsets of $\pa{v}$;
we require a sequence of sets
$$
\emptyset = K_0 \subsetneq K_1 \subsetneq \cdots \subsetneq K_{|\pa{v}|} = \pa{v},
$$
where for $i=1,\dots, m$, $|K_i| = i$.
For efficient computations in PCBNs this sequence has to be such that all ``relationships'' between the nodes in a subset $K_i$ are ``local''.
More specifically two nodes $v_1$ and $v_2$ are locally related in $K_i$ if they are adjacent (directly related) or if any active trail given the empty set between them consists of nodes in $K_i$ (indirectly locally related).

\begin{definition}
    \label{def:propadj}
    Let $\G$ be a DAG and $K$ a subset of $\V$.
    We say that $K$ has \textbf{local relationships} if
    for all $ v_1, v_2 \in K$ such that there exists a trail
    $$
    v_1 \har x_1 \har \cdots \har x_n \har v_2
    $$
    with $x_i \notin K$ for all $i=1, \dots, n$ and no converging connections, then $v_1$ and $v_2$ are adjacent.
\end{definition}

Obviously, $V$ and singletons always have local relationships.
In the following proposition, we show that any set that has local relationships can be decomposed into a partition of sets that have local relationships and that are pairwise d-separated.
Furthermore, each of these subsets can be chosen to be connected.

\begin{proposition}
    A set $K$ has local relationships if and only if there exists a partition $K = \mathop{\bigsqcup}\limits_{i=1}^k K_i$ such that each part $K_i$ is connected and has local relationships in $\G$, and $\forall i \neq j$, $\dsepbig{K_i}{K_j}{\emptyset}$.
\end{proposition}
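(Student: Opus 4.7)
The backward direction is essentially immediate, so my plan is to dispose of it first and then focus the work on the forward direction by exhibiting a canonical partition of $K$ as the set of equivalence classes of an appropriate relation.

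For the backward direction, suppose the partition $K = \bigsqcup_{i=1}^k K_i$ exists with the stated properties, and take $v_1, v_2 \in K$ together with a trail $v_1 \har x_1 \har \cdots \har x_n \har v_2$ with $x_i \notin K$ for all $i$ and no converging connection. Since this trail is activated by $\emptyset$ (by \cref{lemma:trail_activ_emptyset}) and $K_i, K_j$ are pairwise d-separated by $\emptyset$ for $i \ne j$, the endpoints $v_1, v_2$ must lie in the same part $K_{i_0}$. Because $x_i \notin K \supseteq K_{i_0}$ for all $i$, the local-relationships property of $K_{i_0}$ forces $v_1$ and $v_2$ to be adjacent.

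For the forward direction, define a relation $\sim$ on $K$ by declaring $v_1 \sim v_2$ whenever there is a finite sequence $v_1 = u_0, u_1, \dots, u_m = v_2$ in $K$ such that each consecutive pair $u_j, u_{j+1}$ either coincides or is connected by a trail activated by $\emptyset$ whose interior nodes all lie in $V \setminus K$. Reflexivity, symmetry, and transitivity are clear by concatenation, so $\sim$ partitions $K$ into equivalence classes $K_1, \dots, K_k$, which I take as the parts of the claimed partition. Each $K_i$ is connected in the skeleton, since for consecutive $u_j, u_{j+1}$ the local-relationships property of $K$ yields an edge $u_j \har u_{j+1}$ in $\overline{G}[K_i]$.

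The two remaining properties rely on the following elementary observation that I will use repeatedly: any contiguous sub-trail of a trail with no converging connection also has no converging connection, hence is still activated by $\emptyset$ in view of \cref{lemma:trail_activ_emptyset}. To verify local relationships of $K_i$, take $v_1, v_2 \in K_i$ and a trail $T$ between them activated by $\emptyset$ whose interior nodes lie in $V \setminus K_i$. I claim these interior nodes actually lie in $V \setminus K$: if some interior node $x_j \in K$, then picking the first such $j$ and using the sub-trail $v_1 \har x_1 \har \cdots \har x_j$ (interiors in $V \setminus K$, still activated by $\emptyset$) shows $v_1 \sim x_j$, forcing $x_j \in K_i$, a contradiction. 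With all interiors in $V \setminus K$, local relationships of $K$ delivers the adjacency of $v_1$ and $v_2$. For pairwise d-separation, suppose a trail $T$ between $v_1 \in K_i$ and $v_2 \in K_j$ is activated by $\emptyset$; listing the interior nodes of $T$ that belong to $K$ in order and splitting $T$ accordingly, each consecutive piece witnesses an $\sim$-link, giving by transitivity $v_1 \sim v_2$ and thus $i = j$, a contradiction.

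The only mildly delicate step is the argument that interior $K$-nodes of a putative counterexample trail are themselves in $K_i$; once the equivalence relation is set up, this is the place where the three consequences (connectedness, local relationships within each part, and d-separation between parts) all reduce to the same sub-trail splitting trick, so no separate difficulty arises.
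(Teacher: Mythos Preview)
Your proof is correct and follows essentially the same approach as the paper: both construct the partition as the connected components of $K$ (your relation $\sim$ coincides with connectedness in $K$ once local relationships of $K$ is assumed), and verify each property by the same sub-trail argument showing that interior nodes outside $K_i$ must in fact lie outside $K$. The only cosmetic difference is that the paper handles the d-separation step via a shortest-trail-between-parts argument, whereas you split the trail at its $K$-nodes and chain the resulting $\sim$-links directly.
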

\begin{proof}
    It is straightforward to see that $K = \mathop{\bigsqcup}\limits_{i=1}^k K_i$ has local relationships if the parts $K_i$ are connected and pairwise d-separated by the empty set.
    Indeed, let $v_1$ and $v_2$ in $K$ such that there exists a trail between them with no converging connection.
    Then $v_1$ and $v_2$ are not d-separated by the empty set; therefore they belong to the same $K_i$, which is assumed to have local relationships.

    \medskip
    
    We now prove the ``if'' part.
    Let $K_1, \dots, K_k$ be the partition of $K$ in equivalence classes for the equivalence relationship ``is connected in $K$ to''.
    By definition, each set $K_j$ is connected.

    \medskip

    First, we prove that each part $K_j$ has local relationships.
    Let $v_1$ and $v_2$ in $K_j$ for some $j$.
    Assume that there exists a trail $v_1 \har x_1 \har \cdots \har x_n \har v_2$
    with $x_i \notin K_j$ for all $i=1, \dots, n$ and no converging connections.
    Then we have $x_i \notin K$ for all $i=1, \dots, n$.
    Indeed, by contradiction, let $i$ be the smallest integer such that $x_i \in K$.
    Since $x_i \notin K_j$ we can define $\ell \neq j$ such that $x_i \in K_\ell$.
    Then we have two cases:
    \begin{enumerate}
        \item if $i = 1$, $v_1 \in K_j$ and $x_1 \in K_\ell$ are adjacent, which is impossible since the parts are the equivalence classes for the equivalence relationship ``is connected in $K$ to''.
        \item if $i > 1$, then $v_1 \har x_1 \har \cdots \har x_i$ is a trail with no converging connection between nodes in $K$ consisting of nodes in $\V \setminus K$.
        Because $K$ has local relationships, $v_1 \in K_j$ and $x_i \in K_\ell$ are adjacent.
        Again this not possible by the chosen partition.
    \end{enumerate}
    We have shown that $x_i \notin K$ for all $i=1, \dots, n$.
    Since $K$ has local relationships, $v_1$ and $v_2$ are adjacent.
    Therefore, we have shown that each part $K_j$ has local relationships.

    \medskip

    Let $K_i$ and $K_j$ such that $\notdsepbig{K_i}{K_j}{\emptyset}$, and $i \neq j$.
    Then there exist $v_1 \in K_i$, $v_2 \in K_j$ such that $\notdsepbig{v_1}{v_2}{\emptyset}$.
    Therefore, there exists a trail $v_1 \har x_1 \har \cdots \har x_n \har v_2$ with no converging connection. Let us pick such a trail between $K_i$ and $K_j$ of smallest length.
    We distinguish three cases:
    \begin{enumerate}
        \item $v_1 \in K_i$ and $v_2 \in K_j$ adjacent. This is impossible by the definition of our partition.        
        \item For all $i$, $x_i \notin K$. Therefore $v_1 \in K_i$, $v_2 \in K_j$ are adjacent, which is not possible by the same argument as above.
        \item There exists an $\ell$ such that $x_\ell \in K$.
        Let $\ell$ be the smallest integer such that $x_\ell \in K$.
        Consequently, because $K$ has local relationships, $v_1$ and $x_\ell$ are adjacent.
        Since $v_1 \in K_i$, we obtain that $x_\ell$ belongs to the connected component of $v_1$; i.e. to $K_i$.
        So we obtain a trail $x_\ell \har \cdots \har x_n \har v_2$ between $x_\ell \in K_i$ and $v_2 \in K_j$ that has no converging connection.
        This is a contradiction because this trail is strictly shorter than $v_1 \har x_1 \har \cdots \har x_n \har v_2$ which was chosen to be of minimal length.
    \end{enumerate}
    Therefore, we have proved that $\forall i \neq j$, $\dsepbig{K_i}{K_j}{\emptyset}$.
\end{proof}

\begin{remark}
    This is the best characterization of sets with local relationships.
    Indeed, there exist graphs with connected subsets that still do not have local relationships.
    For example, let us consider $\V = \{1, 2, 3, 4, 5\}$ with the edges
    $1 \rightarrow 2  \rightarrow 3  \rightarrow 4$ and
    $1 \rightarrow 5  \rightarrow 4$.
    Then $K = \{1, 2, 3, 4\}$ is connected but does not have local relationships because the trail $1 \rightarrow 5  \rightarrow 4$ has no converging connection but still $1$ and $4$ are not adjacent.
\end{remark}

\begin{corollary}
    Let $K$ be a set with local relationships. Then for every $v_1 \neq v_2 \in K$, $v_1$ and $v_2$ are either connected in $K$ or d-separated given the empty set.
\end{corollary}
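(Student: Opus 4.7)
The plan is to derive the corollary as an immediate consequence of the preceding proposition, whose partition already encodes exactly the dichotomy in question. Given $v_1 \neq v_2$ in $K$, I would apply the proposition to obtain a decomposition $K = \bigsqcup_{i=1}^{k} K_i$ into connected parts that are pairwise d-separated by $\emptyset$ and each of which has local relationships.

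Then I would split into two cases according to where $v_1$ and $v_2$ land in the partition. If they lie in the same part $K_i$, then because $K_i$ is connected (in the sense of the proposition, i.e.\ connected through nodes of $K$), $v_1$ and $v_2$ are connected in $K$. Otherwise, say $v_1 \in K_i$ and $v_2 \in K_j$ with $i \neq j$; then by the proposition $\dsepbig{K_i}{K_j}{\emptyset}$, and since d-separation of two sets implies d-separation of any pair of their members (every trail from $v_1$ to $v_2$ is in particular a trail from $K_i$ to $K_j$, hence blocked by $\emptyset$), we conclude $\dsepbig{v_1}{v_2}{\emptyset}$.

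The only mild point to verify is the compatibility of the two notions of ``connected'': the proposition's parts are built as equivalence classes of the relation ``connected in $K$ to'', so the first case really does yield connectedness in $K$ as claimed by the corollary. There is no substantive obstacle, and no additional graph-theoretic machinery (such as the results on active cycles or minimal trails) needs to be invoked — the proposition already does the entire work.
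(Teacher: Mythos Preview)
Your proposal is correct and matches the paper's intended approach: the corollary is stated without proof precisely because it drops out of the preceding proposition in exactly the way you describe, using that the parts $K_i$ are the equivalence classes of ``connected in $K$ to'' and are pairwise d-separated by $\emptyset$. There is nothing to add.
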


\begin{remark}
    The results above give an explicit approach to construct examples of graph $\G = (\V, \E)$ with a subset $K$ that has local relationships;
    \begin{enumerate}
        \item choose an arbitrary DAG $(K, E_K)$,
        
        \item add other nodes and edges while respecting the principle: Do not add trails with no converging connections between nodes of $K$ that are not adjacent.
    \end{enumerate}
\end{remark}

The local relationship property can be lost by removal of one node.
Indeed, let $v \in V$.
Then $V \setminus \{v\}$ has local relationships if and only if the following conditions holds:
$\forall v_1, v_2 \in \pa{v} \sqcup \ch{v}$, if $(v_1, v, v_2)$ is a serial or diverging connection then $v_1$ and $v_2$ are adjacent.
In other words, all pairs of adjacent-to-$v$ nodes for which $v$ is a serial or diverging connection are adjacent to each other.
In particular, $V \setminus \{v\}$ always has local relationships if $v$ has no children.

In the theorem below we show that if we require the subset $Y \sqcup Z$ of $\V$ to have local relationships, then additional properties of the minimal trails can be proven.

\begin{theorem}
\label{thm:minimal_trails_local_relationship}
    Let $X,Y,Z \subseteq \V$ be three disjoint subsets and $Y \sqcup Z$ has local relationships (\cref{def:propadj}).
    Assume that $\TRAILSbig{X}{Y}{Z} \neq \emptyset$ and  let $T$ a trail of the form (\ref{trail:lemma_XYZ_Cs}) be a minimal element of $\TRAILSbig{X}{Y}{Z} $
    with respect to the order $\smallerTRAIL$. Then, the following properties hold.
    \begin{itemize}
        \item[(i)] The final converging node $c_C$ is in $Z$.
        \label{prop:final_c_in_Z}

        \item[(ii)] For all $i=1, \dots, C - 1$, we have $c_i \in Z$ or $c_{i+1} \in Z$.
        \label{prop:most_ci_in_Z}
        
        \item[(iii)] For all $i=1,\dots,C$, the nodes $c_i$ and $c_{i+1}$ are adjacent.
        \label{prop:adjacency}
        
        \item[(iv)]
        \label{prop:subgraphs}
        If this trail contains a total of $C>0$ converging connections, then $\G$ contains the subgraph below.
    
        \begin{figure}[H]
        \centering
        \begin{tikzpicture}[scale=1, transform shape, node distance=1cm, state/.style={circle, font=\Large, draw=black}]

        \node (c1) {$c_{1}$};
        \node[right= of c1] (c2) {$c_{2}$};
        \node[right = 1.5cm of c2] (cC-1) {$c_{C-1}$};
        \node[right= of cC-1] (cC) {$c_{C}$};
        \node[right = of cC] (y) {$y$};

        \node[left = of c1] (t0n) {$t^0_n$};
        \node[left = of t0n] (t01) {$t^0_1$};
        \node[left = of t01] (x) {$x$};

        \begin{scope}[>={Stealth[length=6pt,width=4pt,inset=0pt]}]
            \path [-] (c1) edge[bend left=45] node {} (c2);
            \path [-] (cC-1) edge[bend left=45] node {} (cC);
            \path [-] (cC) edge[bend left=45] node {} (y);
        
            \draw[transform canvas={yshift=0.21ex},-left to,line width=0.25mm] (c1) -- (c2);
            \draw[transform canvas={yshift=-0.21ex},left to-,line width=0.25mm] (c1) -- (c2);
        
            \draw[transform canvas={yshift=0.21ex},-left to,line width=0.25mm] (cC-1) -- (cC);
            \draw[transform canvas={yshift=-0.21ex},left to-,line width=0.25mm] (cC-1) -- (cC);
        
            \draw[transform canvas={yshift=0.21ex},-left to,line width=0.25mm] (cC) -- (y);
            \draw[transform canvas={yshift=-0.21ex},left to-,line width=0.25mm] (cC) -- (y);
        
            \draw[loosely dotted, line width=0.5mm] (c2) -- (cC-1);

            \path [->] (t0n) edge node {} (c1);
            \draw[loosely dotted, line width=0.5mm] (t01) -- (t0n);
            \draw[transform canvas={yshift=0.21ex},-left to,line width=0.25mm] (x) -- (t01);
            \draw[transform canvas={yshift=-0.21ex},left to-,line width=0.25mm] (x) -- (t01);
            
        \end{scope}
        \end{tikzpicture}
        \end{figure}
        Here, the curved lines represent one of the following two subgraphs.
        
        \begin{figure}[H]
        \centering
        \begin{subfigure}{0.45\linewidth}
        \centering
        \begin{tikzpicture}[scale=0.8, transform shape, node distance=1cm, state/.style={circle, font=\Large, draw=black}]
        \node (v1) {$c_i$};
        \node[right= of v1] (v2) {$c_{i+1}$};
        \node[above left= of v1] (x1) {$t^i_1$};
        \node[above right= of v2] (xn) {$t^i_n$};
        \node[above = of x1] (x2) {$t^i_2$};
        \node[above = of xn] (xn-1) {$t^i_{n-1}$};
        \begin{scope}[>={Stealth[length=4pt,width=3pt,inset=0pt]}]
        \path [->] (v1) edge node {} (v2);
        \path [->] (v1) edge[bend right=0] node {} (x2);
        \path [->] (v1) edge node {} (xn-1);
        \path [->] (v1) edge node {} (xn);
        \path [->] (xn) edge node {} (v2);
        \path [->] (x1) edge node {} (v1);
        \path [->] (x1) edge node {} (x2);
        \path [->] (xn-1) edge node {} (xn);
        \draw[loosely dotted, line width=0.4mm] (x2) -- (xn-1);
        \end{scope}
        \end{tikzpicture}
                \end{subfigure}
                \begin{subfigure}{0.45\linewidth}
                \centering
        \begin{tikzpicture}[scale=0.8, transform shape, node distance=1cm, state/.style={circle, font=\Large, draw=black}]
        \node (v1) {$c_i$};
        \node[right= of v1] (v2) {$c_{i+1}$};
        \node[above left= of v1] (x1) {$t^i_1$};
        \node[above right= of v2] (xn) {$t^i_n$};
        \node[above = of x1] (x2) {$t^i_2$};
        \node[above = of xn] (xn-1) {$t^i_{n-1}$};
        \begin{scope}[>={Stealth[length=4pt,width=3pt,inset=0pt]}]
        \path [->] (v2) edge node {} (v1);
        \path [->] (v2) edge[bend right=0] node {} (x2);
        \path [->] (v2) edge node {} (xn-1);
        \path [->] (v2) edge node {} (x1);
        \path [->] (xn) edge node {} (v2);
        \path [->] (x1) edge node {} (v1);
        \path [->] (x2) edge node {} (x1);
        \path [->] (xn) edge node {} (xn-1);
        \draw[loosely dotted, line width=0.4mm] (x2) -- (xn-1);
        \end{scope}
        \end{tikzpicture}
        \end{subfigure}
        \end{figure}
    \end{itemize}
\end{theorem}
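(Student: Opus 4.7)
The plan hinges on a single pattern: for each property I identify two specific nodes in $Y \sqcup Z$ joined by an auxiliary trail with no converging connection and interior disjoint from $Y \sqcup Z$; \cref{def:propadj} then forces an arc between them. One orientation of that arc produces a new element of $\TRAILSbig{X}{Y}{Z}$ strictly smaller than the minimal trail under $\smallerTRAIL$; the other closes an undirected cycle that, after ruling out chords, is an active cycle, contradicting the assumption on $\G$. The auxiliary trails are built by splicing subtrails (which have no converging connection by \cref{thm:minimal_trails}) with the serial descendant paths $c_i \ra d^i_1 \ra \cdots \ra Z(c_i)$; the concatenation remains converging-free because the joining node $c_i$ is then traversed as a serial connection.

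For \emph{(i)}, assume $c_C \notin Z$ and take the auxiliary trail $Z(c_C) \la d^C_{\n{Z}{C}} \la \cdots \la c_C \la t^C_1 \har \cdots \har y$ with endpoints $Z(c_C) \in Z$ and $y \in Y$. If local relationships yield $y \ra Z(c_C)$, then $x \har \cdots \ra c_C \ra d^C_1 \ra \cdots \ra Z(c_C) \la y$ lies in $\TRAILSbig{X}{Y}{Z}$ and has one fewer converging node outside $Z$, contradicting minimality. If instead $Z(c_C) \ra y$, closing the auxiliary trail by this arc produces an undirected cycle with a converging connection at $y$; using \cref{thm:minimal_trails} and further smaller-trail arguments to exclude all chords shows this cycle is active, contradicting the assumption on $\G$. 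Property \emph{(ii)} follows from the same template, applied to the auxiliary trail $Z(c_i) \la \cdots \la c_i \la t^i_1 \har \cdots \ra c_{i+1} \ra \cdots \ra Z(c_{i+1})$ when both $c_i, c_{i+1} \notin Z$.

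For \emph{(iii)}, I first use \emph{(i)} and \emph{(ii)} to assume (with the convention $c_{C+1} := y$) that at least one of $c_i, c_{i+1}$ lies in $Y \sqcup Z$; say $c_i \in Z$. If also $c_{i+1} \in Z$, the $i$-th subtrail itself joins two elements of $Y \sqcup Z$ with interior outside, and \cref{def:propadj} yields adjacency. If $c_{i+1} \notin Z$, the auxiliary trail is extended by the descendant path of $c_{i+1}$ to end at $Z(c_{i+1})$, and the two orientations are handled as before, with the smaller-trail side introducing a new active converging connection at $Z(c_{i+1})$ in place of $c_{i+1}$. Once \emph{(iii)} holds, \emph{(iv)} follows quickly: by \cref{thm:minimal_trails} the $i$-th subtrail is a shortest trail between $c_i$ and $c_{i+1}$ with no converging connection and starting with a leftward arrow, so \cref{lemma:po_active_cycle1} applies. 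The case $c_i \ra c_{i+1}$ yields the first displayed subgraph directly; the case $c_{i+1} \ra c_i$ yields the second after reversing the subtrail and applying \cref{lemma:po_active_cycle1} with the roles of endpoints swapped.

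The main obstacle is the wrong-orientation case throughout, namely showing that the putative active cycles really have no chords. Beyond the chords ruled out by \cref{thm:minimal_trails}, one must exclude cross-chords between $t^i_j$ nodes and $d^i_\ell$ nodes, as well as chords joining either of these to a $c_k$ or $y$. For each such putative chord the strategy is to exhibit a strictly smaller trail in $\TRAILSbig{X}{Y}{Z}$, typically by re-routing through a newly active converging connection at the head of the chord; I expect this systematic chord elimination to be the bulk of the technical work.
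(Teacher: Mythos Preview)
Your plan for (i) matches the paper: one orientation of the arc $y \har Z(c_C)$ yields a smaller trail, the other a cycle whose chord analysis gives a contradiction (though you will find that not every chord can be excluded---some must be admitted, and a smaller nested cycle is then shown to be active).

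The ``one orientation gives a smaller trail'' template does not, however, carry over to (ii) and (iii) as you claim. In (ii) both endpoints $Z(c_i),\,Z(c_{i+1})$ lie in $Z$; whichever way the arc between them is oriented, any trail re-routed through it traverses one of these two nodes as a \emph{serial} connection, and since that node is in $Z$ the new trail is blocked by \cref{def:graph_dsep}. Thus neither orientation produces an element of $\TRAILSbig{X}{Y}{Z}$, and the paper handles both orientations by a (symmetric) active-cycle argument instead.

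The gap is more serious in (iii). In your case $c_i \in Z$, $c_{i+1}\notin Z$, local relationships only yield adjacency of $c_i$ with $Z(c_{i+1})$, not with $c_{i+1}$ itself. Your proposed smaller trail, with a new converging connection at $Z(c_{i+1})$ replacing $c_{i+1}$, is again blocked: if $c_i \ra Z(c_{i+1})$ then $c_i\in Z$ becomes serial, and if $Z(c_{i+1}) \ra c_i$ then $Z(c_{i+1})\in Z$ becomes serial. The paper's route is qualitatively different here: it closes the cycle through the arc $c_i \har Z(c_{i+1})$, carries out the chord elimination, and in one orientation finds that the \emph{only} remaining chord that can prevent an active cycle is precisely $c_{i+1} \ra c_i$. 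The no-active-cycle hypothesis then forces that arc to exist. This ``forced chord is the desired conclusion'' mechanism, rather than a contradiction from both orientations, is the idea your plan is missing for (iii).
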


\begin{proof}
We prove each property separately.

\medskip

    \textbf{\ref{prop:final_c_in_Z}(i):} We want to show that the final converging node $c_C$ is in $Z$.
    \medskip

    Consider the case when $c_C \neq Z(c_C)$.
    Then, $\G$ contains the trail 
    $$
    Z(c_C) \la d_n \la \cdots \la d_1 \la c_C \la t_1 \har \cdots \har t_n \har y.
    $$
    This is a trail between two nodes in $Y \sqcup Z$ consisting of nodes not in $Y \sqcup Z$ by \cref{prop:nodes_along_subtrails}(i).
    Since the trail does not contain any converging connections and $Y \sqcup Z$ has local relationships we find that $Z(c_C)$ and $y$ must be adjacent.
    Assume that the arc $Z(c_c) \la y$ is present.
    Consider the trail
    \begin{align*}
        x \har \cdotslong \ra c_C \ra d_1 \ra \cdots \ra d_n \ra Z(c_C) \la y.
    \end{align*}
    In this trail, $c_C$ is now not a converging connection, instead $Z(c_C)$ is a converging node.
    Therefore, this trail has the same amount of converging connections $C$,
    but one fewer converging node corresponding to a node not in $Z$ than $T$. 
    This is because $c_C \notin Z$ while $Z(c_C) \in Z$.
    So, the trail above is smaller than $T$.
    Since $T$ is assumed to be a minimal trail, we have a contradiction, and therefore $\E$ must contain the arc $Z(c_C) \ra y$, giving us the subgraph in \cref{Fig:yZ(c_C)}.

   \begin{minipage}{0.45\textwidth}
    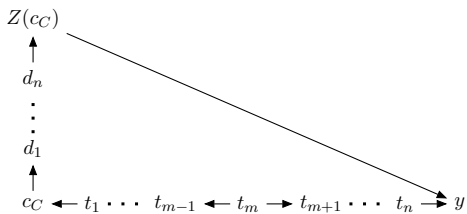
\begin{figure}[H]
        \centering
 \begin{tikzpicture}[scale=0.7, transform shape, node distance=0.5cm, state/.style={circle, font=\Large, draw=black}]
    \node (cj) {$c_C$};
    \node[above = of cj] (x1) {$d_1$};
    \node[above = 0.75cm of x1] (xn) {$d_n$};
    \node[above = of xn] (ocj) {$Z(c_C)$};
    \node[right = of cj] (y1) {$t_1$};
    \node[right = 0.75cm of y1] (ym-1) {$t_{m-1}$};
    \node[right = of ym-1] (ym) {$t_m$};
    \node[right = of ym] (ym+1) {$t_{m+1}$};
    \node[right = 0.75cm of ym+1] (yn) {$t_n$};
    \node[right = of yn] (oc) {$y$};
    
    \begin{scope}[>={Stealth[length=4pt,width=3pt,inset=0pt]}]
    
    \path [->] (cj) edge node {} (x1);
    \draw[loosely dotted, line width=0.4mm] (x1) -- (xn);
    \path [->] (xn) edge node {} (ocj);
    \path [->] (y1) edge node {} (cj);
    \draw[loosely dotted, line width=0.4mm] (y1) -- (ym-1);
    \path [->] (ym) edge node {} (ym-1);
    \path [->] (ym) edge node {} (ym+1);
    \draw[loosely dotted, line width=0.4mm] (ym+1) -- (yn);
    \path [->] (yn) edge node {} (oc);

    \path [->] (ocj) edge node {} (oc);

    \end{scope}
    \end{tikzpicture}
    \caption{Trail between $y$ and $Z(c_C)$.}\label{Fig:yZ(c_C)}
    \end{figure}
    \end{minipage}\;\;
    \begin{minipage}{0.45\textwidth}
\begin{figure}[H]
      \centering
 \begin{tikzpicture}[scale=0.7, transform shape, node distance=0.5cm, state/.style={circle, font=\Large, draw=black}]
    \node (cj) {$c_C$};
    \node[above = of cj] (x1) {$d_1$};
    \node[above = 0.75cm of x1] (xn) {$d_n$};
    \node[above = of xn] (ocj) {$Z(c_C)$};
    \node[right = of cj] (y1) {$t_1$};
    \node[right = 0.75cm of y1] (ym-1) {$t_{m-1}$};
    \node[right = of ym-1] (ym) {$t_m$};
    \node[right = of ym] (ym+1) {$t_{m+1}$};
    \node[right = 0.75cm of ym+1] (yn) {$t_n$};
    \node[right = of yn] (oc) {$y$};
    
    \begin{scope}[>={Stealth[length=4pt,width=3pt,inset=0pt]}]
    
    \path [->] (cj) edge[red] node {} (x1);
    \draw[loosely dotted, line width=0.5mm, red] (x1) -- (xn);
    \path [->] (xn) edge[red] node {} (ocj);
    \path [->] (y1) edge[red] node {} (cj);
    \draw[loosely dotted, line width=0.4mm, red] (y1) -- (ym-1);
    \path [->] (ym) edge[red] node {} (ym-1);
    \path [->] (ym) edge[red] node {} (ym+1);
    \draw[loosely dotted, line width=0.4mm] (ym+1) -- (yn);
    \path [->] (yn) edge node {} (oc);

    \path [->] (ocj) edge node {} (oc);
    \path [->] (ocj) edge node {} (yn);
    \path [->] (ocj) edge[red] node {} (ym+1);
    
    \end{scope}
    \end{tikzpicture}
        \caption{Trail between $y$ and $Z(c_C)$ with chords.}\label{Fig:yZ(c_C)withcords}
    \end{figure}
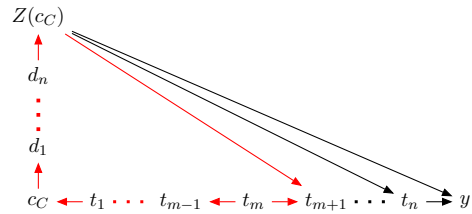        
\end{minipage}

\medskip    
    The undirected cycle above has one converging connection (at $y$); therefore it is an active cycle, unless $\E$ contains the appropriate chords.
    Several chords can be excluded:
    
    \begin{itemize}
        \item The trails 
        $c_C \ra d_1 \ra \cdots \ra d_n \ra Z(c_C)$
        and $t_1 \har \cdots \har t_n \ra y$
        do not contain any chords by \ref{prop:no_chords}(ii).
        \item $\forall j = 0, \dots, n+1,$
        $\forall l = 1, \dots, m,$
        $d_j \ra t_l$ results in a cycle.
        
        \item $\forall j = 0, \dots, n,$
        $\forall l = m+1, \dots, n+1,$
        $d_j \ra t_l$ results in a trail with fewer converging connections.
        
        \item $\forall j = 1, \dots, n,$
        $\forall l = 1, \dots, n+1,$
        $t_l \ra d_j$ results in a trail with shorter descendant paths.
        
        \item $\forall l = 2, \dots, n+1,$
        $t_l \ra c_C$ results in a shorter trail.
        
        \item $\forall l = 1, \dots, n,$
        $t_l \ra Z(c_C)$ results in a trail with fewer converging nodes not in $Z$.
    \end{itemize}
    
    Therefore, the only allowed chords are arcs from the node $Z(c_C)$ to nodes in $\{t_j\},$ $j=m+1, \dots, n$.
    The absence of any of them would result in an active cycle; therefore they all have to be present,
    giving us the subgraph in \cref{Fig:yZ(c_C)withcords}.
    The undirected cycle displayed in red is an active cycle, unless it is of length smaller than 4.
    It consists of the nodes $c_C$, $Z(c_C)$, $d_1, \dots ,d_n$ and $t_1,\dots,t_{m+1}$ and is therefore of length $2 + n + m+ 1 = n+m+3$.
    This means that $n+m+3 \leq 3$, and therefore $n+m=0$.
    However, this means that $t_m = t_0 := c_C$, and therefore $c_C \ra t_1$.
    This is a contradiction with the definition of $c_C$ since it is a converging node in $T$, which completes the proof of \ref{prop:final_c_in_Z}(i).

    \textbf{\ref{prop:most_ci_in_Z}(ii):}
    We want to show that, for all $i \in \{1, \dots, C - 1 \}$, we have $c_i \in Z$ or $c_{i+1} \in Z$.

    Assume that there exists an $i \in \{1, \dots, C - 1 \}$ such that $c_i, c_{i+1}\notin Z$.
    Then, we have the descendant paths
    \begin{align*}
        c_i \ra d^i_1 \ra \cdots \ra d^i_n \ra Z(c_i) \text{  and  }
        c_{i+1} \ra d^{i+1}_1 \ra \cdots \ra d^{i+1}_n \ra Z(c_{i+1}).
    \end{align*}
    Therefore,
    $Z(c_i)$ and $Z(c_{i+1})$ are two nodes in $Y \sqcup Z$ joined by a trail
    $$
    Z(c_i) \la \cdots \la c_i \la \cdots \ra c_{i+1} \ra \cdots \ra Z(c_{i+1})
    $$
    which is activated by the empty set and contains no nodes in $Y \sqcup Z$ (by \ref{prop:nodes_along_subtrails}(i)).
    Because $Y \sqcup Z$ has local relationships, the nodes $Z(c_i)$ and $Z(c_{i+1})$ must be adjacent.
    
    We can assume that $Z(c_i) \ra Z(c_{i+1})$, since the case $Z(c_i) \la Z(c_{i+1})$ follows by an analogous proof.
    Remark that $\G$ contains the subgraph in \cref{Fig:desc}.
    
    \begin{minipage}{0.45\textwidth}
    \begin{figure}[H]
        \centering
        \begin{tikzpicture}[scale=0.8, transform shape, node distance=0.6cm, state/.style={circle, font=\Large, draw=black}]

        \node (cj) {$c_i$};
        \node[right = of cj] (y1) {$t_1$};
        \node[right = of y1] (ym-1) {$t_{m-1}$};
        \node[right = of ym-1] (ym) {$t_m$};
        \node[right = of ym] (ym+1) {$t_{m+1}$};
        \node[right = of ym+1] (yn) {$t_n$};
        \node[right = of yn] (cj+1) {$c_{i+1}$};

        \node[above = 0.5cm of cj] (x1) {$d^i_1$};
        \node[above = of x1] (xn) {$d^i_{\n{Z}{i}}$};
        \node[above = 0.5cm of xn] (ocj) {$Z(c_i)$};

        \node[above = 0.5cm of cj+1] (z1) {$d^{i+1}_1$};
        \node[above = of z1] (zn) {$d^{i+1}_{n_Z(i +1)}$};
        \node[above = 0.5cm of zn] (ocj+1) {$Z(c_{i+1})$};

        \begin{scope}[>={Stealth[length=4pt,width=3pt,inset=0pt]}]
        \path [->] (ocj) edge node {} (ocj+1);

        \path [->] (ym) edge node {} (ym-1);
        \path [->] (ym) edge node {} (ym+1);
        \draw[loosely dotted, line width=0.4mm] (y1) -- (ym-1);
        \draw[loosely dotted, line width=0.4mm] (yn) -- (ym+1);
        \path [->] (y1) edge node {} (cj);
        \path [->] (yn) edge node {} (cj+1);
        
        \path [->] (cj) edge node {} (x1);
        \draw[loosely dotted, line width=0.4mm] (x1) -- (xn);
        \path [->] (xn) edge node {} (ocj);

        \path [->] (cj+1) edge node {} (z1);
        \draw[loosely dotted, line width=0.4mm] (z1) -- (zn);
        \path [->] (zn) edge node {} (ocj+1); 
        \end{scope}
        \end{tikzpicture}
        \caption{Graph with $Z(c_i)$ and $Z(c_{i+1})$. }\label{Fig:desc}
    \end{figure}
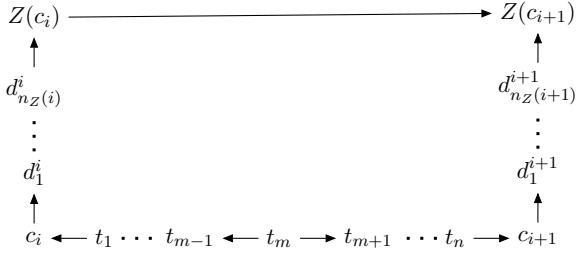
    \end{minipage}\hfill
    \begin{minipage}{0.45\textwidth}
    \begin{figure}[H]
        \centering
        \begin{tikzpicture}[scale=0.8, transform shape, node distance=0.6cm, state/.style={circle, font=\Large, draw=black}]

        \node (cj) {$c_i$};
        \node[right = of cj] (y1) {$t_1$};
        \node[right = of y1] (ym-1) {$t_{m-1}$};
        \node[right = of ym-1] (ym) {$t_m$};
        \node[right = of ym] (ym+1) {$t_{m+1}$};
        \node[right = of ym+1] (yn) {$t_n$};
        \node[right = of yn] (cj+1) {$c_{i+1}$};

        \node[above = 0.5cm of cj] (x1) {$d^i_1$};
        \node[above = of x1] (xn) {$d^i_{\n{Z}{i}}$};
        \node[above = 0.5cm of xn] (ocj) {$Z(c_i)$};

        \node[above = 0.5cm of cj+1] (z1) {$d^{i + 1}_1$};
        \node[above = of z1] (zn) {$d^{i+1}_{\n{Z}{i+1}}$};
        \node[above = 0.5cm of zn] (ocj+1) {$Z(c_{i+1})$};

        \begin{scope}[>={Stealth[length=4pt,width=3pt,inset=0pt]}]
        \path [->] (ocj) edge node {} (ocj+1);

        \path [->] (ym) edge[red] node {} (ym-1);
        \path [->] (ym) edge[red] node {} (ym+1);
        \draw[loosely dotted, line width=0.4mm, red] (y1) -- (ym-1);
        \draw[loosely dotted, line width=0.4mm] (yn) -- (ym+1);
        \path [->] (y1) edge[red] node {} (cj);
        \path [->] (yn) edge node {} (cj+1);
        
        \path [->] (cj) edge[red] node {} (x1);
        \draw[loosely dotted, line width=0.4mm, red] (x1) -- (xn);
        \path [->] (xn) edge[red] node {} (ocj);

        \path [->] (cj+1) edge node {} (z1);
        \draw[loosely dotted, line width=0.4mm] (z1) -- (zn);
        \path [->] (zn) edge node {} (ocj+1);

        \path [->] (ocj) edge node {} (zn);
        \path [->] (ocj) edge node {} (z1);
        \path [->] (ocj) edge node {} (cj+1);
        \path [->] (ocj) edge node {} (yn);
        \path [->] (ocj) edge[red] node {} (ym+1);
        
        \end{scope}
        
        \end{tikzpicture}
        \caption{Graph with $Z(c_i)$, $Z(c_{i+1})$ and chords.}\label{Fig:desccords}
    \end{figure}
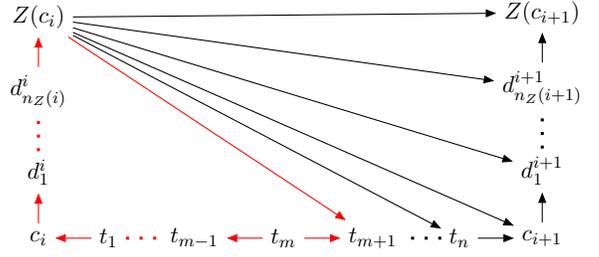
    \end{minipage}
    
    The undirected cycle above has one converging connection (at $Z(c_{i + 1}$); therefore it is an active cycle, unless $\E$ contains the appropriate chords.
    Several chords can be excluded:

    \begin{itemize}
        \item  The trails $c_i \ra d^i_1 \ra \cdots \ra d^i_n \ra Z(c_i)$, $t_1 \har \cdots \har t_n$ and $c_{i+1}\ra d^{i+1}_1 \ra \cdots \ra d^{i+1}_n \ra Z(c_{i+1})$ do not contain chords by \ref{prop:no_chords}(ii).

        \item $\forall j = 0, \dots, n_{Z}(i)+1$, $\forall l= 1, \dots, m$, $d^i_j \ra t_l$ results in a cycle.
        
        \item $\forall j = 0, \dots, n_{Z}(i)$, $\forall l= m+1, \dots, n+1$, $d^i_j \ra t_l$ results in a trail with fewer converging connections.

        \item $\forall j = 0, \dots, n_{Z}(i)$, $\forall l= 0, \dots, \n{Z}{i+1}+1$, $d^i_j \ra d^{i+1}_l$ results in a trail with fewer converging connections.

        \item $\forall j = 0, \dots, n_{Z}(i+1)+1$, $\forall l= m, \dots, n$, $d^{i+1}_j \ra t_l$ results in a cycle.
        
        \item $\forall j = 0, \dots, n_{Z}(i+1)$, $\forall l= 0, \dots, m-1$, $d^{i+1}_j \ra t_l$ results in a trail with fewer converging connections.

        \item $\forall j = 0, \dots, n_{Z}(i+1)$, $\forall l= 0, \dots, \n{Z}{i}+1$, $d^{i+1}_j \ra d^{i}_l$ results in a trail with fewer converging connections.

        \item $\forall j = 1, \dots, n_{Z}(i)$, $\forall l= 1, \dots, n$, $t_l \ra d^i_j$ results in a trail with shorter descendant paths (because $d^i_j$ becomes the new converging connection instead of $c_i$).

        \item $\forall j = 1, \dots, n_{Z}(i+1)$, $\forall l= 1, \dots, n$, $t_l \ra d^{i+1}_j$ results in a trail with shorter descendant paths (because $d^{i+1}_j$ becomes the new converging connection instead of $c_{i+1}$).

        \item $\forall l= 1, \dots, n$, $t_l \ra Z(c_{i})$ results in a trail with fewer converging nodes not in $Z$.
        
        \item $\forall l= 1, \dots, n$, $t_l \ra Z(c_{i+1})$ results in a trail with fewer converging nodes not in $Z$.

        \item $\forall l= 1, \dots, n$, $t_l \ra c_i$ and $t_l \ra c_{i+1}$ result in a shorter trail (the arcs $t_1 \ra c_1$ and $t_n \ra c_{i+1}$ are not chords).

        \item $\forall l= 0, \dots, n+1$, $Z(c_{i+1}) \ra t_l$ results in a cycle.

        \item $\forall j=0, \dots, \n{Z}{i} + 1$, $Z(c_{i+1}) \ra d^i_j$ results in a cycle.

    \end{itemize}
    Therefore, the only allowed chords are of the form 
    $Z(c_i) \ra t_l$ with $l \in \{m + 1, \dots, n+1 \}$ 
    and 
    $Z(c_i) \ra d^{i+1}_j$ with $j \in \{0, \dots, \n{Z}{i+1} \}$.
    All these arcs must be present to prevent an active cycle from occurring, giving us the subgraph in \cref{Fig:desccords}.

    This graph contains an undirected cycle with one converging connection (at $t_{m+1}$), coloured in red.
    There are no more chords which could be present.
    Therefore, this undirected cycle must be of length smaller than 4.
    The undirected cycle is made up of the nodes 
    $c_i$, $Z(c_i)$, $d^i_1, \dots, d^i_{\n{Z}{i}}$
    and
    $t_1, \dots, t_{m+1}$; it is of length
    $2 + \n{Z}{i} + m + 1 = \n{Z}{i} + m + 3$.
    This means that $\n{Z}{i} + m + 3 \leq 3$, and therefore $\n{Z}{i} = m = 0$.

    Thus, $t_m = t_0 := c_i$ must be the first diverging node on the subtrail between $c_i$ and $c_{i+1}$.
    However, this means that $c_i \ra t_1$.
    This is a contradiction with the definition of $c_i$ which is a converging connection in $T$.
    
    \textbf{\ref{prop:adjacency}(iii):}
    We want to show that for all $i=1,\dots,C$, the nodes $c_i$ and $c_{i+1}$ are adjacent.
    
    \medskip

    First, we consider the case when $i = C$.
    Here, $c_C$ is in $Z$ by \ref{prop:final_c_in_Z}(i), and $c_{C+1}:=y$ is in $Y$.
    Moreover, the nodes $c_C$ and $c_{C+1}$ are connected by the trail
    $$
    c_{C} \la t_1 \har \cdots \har t_n \har c_{C+1}
    $$
    with no converging connections and containing no nodes in $Y \sqcup Z$ by \cref{prop:nodes_along_subtrails}(i). From the fact that $Y \sqcup Z$ have local relationships we get that $c_C$ and $c_{C+1}$ are adjacent, completing the proof for the case when $i=C$.

    Now, we prove \ref{prop:adjacency}(iii) for $i \in \{1, \dots, C - 1 \}$.
    Note that, by~\ref{prop:most_ci_in_Z}(ii), at least one of the nodes $c_i$ and $c_{i+1}$ belongs to $Z$, giving us three cases.
    
    \underline{Case 1: $c_i \notin Z$ and $c_{i+1}\in Z$.}\\
    First, we remark that the arc $c_i \ra c_{i+1}$ is not possible by \cref{prop:rightarrow_in_Z}(iii).
    Therefore, we must show that $c_i \la c_{i+1}$.
    Suppose that this arc is not present in $\E$.
    This means that $c_i$ and $c_{i+1}$ are not adjacent.
    Furthermore, $\G$ contains the subgraph in \cref{Fig:subgraph}. 

\begin{minipage}{0.45\textwidth}
    \begin{figure}[H]
        \centering
    \begin{tikzpicture}[scale=0.6, transform shape, node distance=0.8cm, state/.style={circle, font=\Large, draw=black}]

   \node (cj) {$c_i$};
    \node[above = 0.5cm of cj] (x1) {$d^i_1$};
    \node[above = of x1] (xn) {$d^i_{\n{Z}{i}}$};
    \node[above = 0.5cm of xn] (ocj) {$Z(c_i)$};
    \node[right = of cj] (y1) {$t_1$};
    \node[right = of y1] (ym-1) {$t_{m-1}$};
    \node[right = of ym-1] (ym) {$t_m$};
    \node[right = of ym] (ym+1) {$t_{m+1}$};
    \node[right = of ym+1] (yn) {$t_n$};
    \node[right = of yn] (ocj+1) {$c_{i+1}$};

    \begin{scope}[>={Stealth[length=4pt,width=3pt,inset=0pt]}]
    
    \path [->] (cj) edge node {} (x1);
    \draw[loosely dotted, line width=0.4mm] (x1) -- (xn);
    \path [->] (xn) edge node {} (ocj);
    \path [->] (y1) edge node {} (cj);
    \draw[loosely dotted, line width=0.4mm] (y1) -- (ym-1);
    \path [->] (ym) edge node {} (ym-1);
    \path [->] (ym) edge node {} (ym+1);
    \draw[loosely dotted, line width=0.4mm] (ym+1) -- (yn);
    \path [->] (yn) edge node {} (ocj+1);
    \end{scope}
    
    \end{tikzpicture}
    \caption{Subgraph of $\G$.}\label{Fig:subgraph}
    \end{figure}
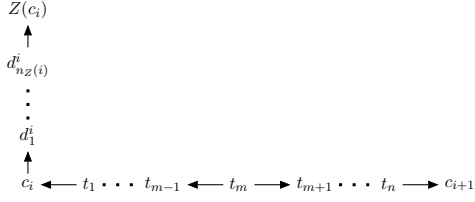
    \end{minipage}\hfill
    \begin{minipage}{0.45\textwidth}
\begin{figure}[H]
      \centering
 \begin{tikzpicture}[scale=0.6, transform shape, node distance=0.8cm, state/.style={circle, font=\Large, draw=black}]
     \node (cj) {$c_i$};
    \node[above = 0.5cm of cj] (x1) {$d^i_1$};
    \node[above = of x1] (xn) {$d^i_{\n{Z}{i}}$};
    \node[above = 0.5cm of xn] (ocj) {$Z(c_i)$};
    \node[right = of cj] (y1) {$t_1$};
    \node[right = of y1] (ym-1) {$t_{m-1}$};
    \node[right = of ym-1] (ym) {$t_m$};
    \node[right = of ym] (ym+1) {$t_{m+1}$};
    \node[right = of ym+1] (yn) {$t_n$};
    \node[right = of yn] (ocj+1) {$c_{i+1}$};

    \begin{scope}[>={Stealth[length=4pt,width=3pt,inset=0pt]}]
    
    \path [->] (cj) edge node {} (x1);
    \draw[loosely dotted, line width=0.4mm] (x1) -- (xn);
    \path [->] (xn) edge node {} (ocj);
    \path [->] (y1) edge node {} (cj);
    \draw[loosely dotted, line width=0.4mm] (y1) -- (ym-1);
    \path [->] (ym) edge node {} (ym-1);
    \path [->] (ym) edge node {} (ym+1);
    \draw[loosely dotted, line width=0.4mm] (ym+1) -- (yn);
    \path [->] (yn) edge node {} (ocj+1);

    \path [->] (ocj) edge node {} (ocj+1);
    
    \end{scope}
    \end{tikzpicture}
        \caption{Subgraph with $Z(c_i)\to c_{i+1}$.}\label{Fig:subgraph_arc1}
    \end{figure}        
\end{minipage}

\medskip
    Thus, $Z(c_i)$ and $c_{i+1} \in Z$ are joined by a trail
    $$
    Z(c_i) \la d_{\n{Z}{i}} \la \cdots \la d_1 \la c_i \la t_1 \har \cdots \har t_n \ra c_{i+1}
    $$
    which is activated by the empty set and consists of nodes not in $Y \sqcup Z$ by \ref{prop:nodes_along_subtrails}(i), and hence they are adjacent due to the local relationship property of $Y \sqcup Z$.
    We consider both cases; when $Z(c_i) \ra c_{i+1}$ and when $Z(c_i) \la c_{i+1}$.

    First, let us assume that $Z(c_i) \ra c_{i+1}$, giving us the subgraph in \cref{Fig:subgraph_arc1}.
    This subgraph contains an undirected cycle with one converging connection (at $c_{i+1}$), hence the appropriate chords must be present.
    The same arcs which provided a contradiction in the proof of \ref{prop:most_ci_in_Z}(ii) still do\footnote{This statement holds because $c_{i+1} = Z(c_{i+1})$}.
    This means that the only possible chords are $Z(c_i) \ra t_l$ with $l \in \{m+1, \dots, n\}$.
    It is evident that all such arcs are required to be present to prevent an active cycle, giving us the subgraph in \cref{Fig:subgraphcords}.
    
    \begin{minipage}{0.45\textwidth}
    \begin{figure}[H]
        \centering
    \begin{tikzpicture}[scale=0.6, transform shape, node distance=0.8cm, state/.style={circle, font=\Large, draw=black}]

    \node (cj) {$c_i$};
    \node[above = 0.5cm of cj] (x1) {$d_1$};
    \node[above = of x1] (xn) {$d_{\n{z}{i}}$};
    \node[above = 0.5cm of xn] (ocj) {$Z(c_i)$};
    \node[right = of cj] (y1) {$t_1$};
    \node[right = of y1] (ym-1) {$t_{m-1}$};
    \node[right = of ym-1] (ym) {$t_m$};
    \node[right = of ym] (ym+1) {$t_{m+1}$};
    \node[right = of ym+1] (yn) {$t_n$};
    \node[right = of yn] (ocj+1) {$c_{i+1}$};

    \begin{scope}[>={Stealth[length=4pt,width=3pt,inset=0pt]}]
    
    \path [->] (cj) edge[red] node {} (x1);
    \draw[loosely dotted, line width=0.4mm, red] (x1) -- (xn);
    \path [->] (xn) edge[red] node {} (ocj);
    \path [->] (y1) edge[red] node {} (cj);
    \draw[loosely dotted, line width=0.4mm, red] (y1) -- (ym-1);
    \path [->] (ym) edge[red] node {} (ym-1);
    \path [->] (ym) edge[red] node {} (ym+1);
    \draw[loosely dotted, line width=0.4mm] (ym+1) -- (yn);
    \path [->] (yn) edge node {} (ocj+1);

    \path [->] (ocj) edge node {} (ocj+1);
    \path [->] (ocj) edge[red] node {} (ym+1);
    \path [->] (ocj) edge node {} (yn);
    \end{scope}
    
    \end{tikzpicture}
    \caption{Subgraph of $\G$ with chords when $Z(c_i) \ra c_{i+1}$.}\label{Fig:subgraphcords}
    \end{figure}
    \end{minipage}\;\;
    \begin{minipage}{0.45\textwidth}
\begin{figure}[H]
      \centering
 \begin{tikzpicture}[scale=0.6, transform shape, node distance=0.8cm, state/.style={circle, font=\Large, draw=black}]
   \node (cj) {$c_i$};
    \node[above = 0.5cm of cj] (x1) {$d^i_1$};
    \node[above = of x1] (xn) {$d^i_{\n{Z}{i}}$};
    \node[above = 0.5cm of xn] (ocj) {$Z(c_i)$};
    \node[right = of cj] (y1) {$t_1$};
    \node[right = of y1] (ym-1) {$t_{m-1}$};
    \node[right = of ym-1] (ym) {$t_m$};
    \node[right = of ym] (ym+1) {$t_{m+1}$};
    \node[right = of ym+1] (yn) {$t_n$};
    \node[right = of yn] (ocj+1) {$c_{i+1}$};

    \begin{scope}[>={Stealth[length=4pt,width=3pt,inset=0pt]}]
    
    \path [->] (cj) edge[red] node {} (x1);
    \draw[loosely dotted, line width=0.4mm] (x1) -- (xn);
    \path [->] (xn) edge node {} (ocj);
    \path [->] (y1) edge[red] node {} (cj);
    \draw[loosely dotted, line width=0.4mm] (y1) -- (ym-1);
    \path [->] (ym) edge node {} (ym-1);
    \path [->] (ym) edge node {} (ym+1);
    \draw[loosely dotted, line width=0.4mm] (ym+1) -- (yn);
    \path [->] (yn) edge node {} (ocj+1);

    \path [->] (ocj+1) edge node {} (ocj);


    \path [->] (ocj+1) edge[bend left = 30, red] node {} (y1);
    \path [->] (ocj+1) edge[bend left = 30] node {} (ym-1);

    \path [->] (ocj+1) edge[red] node {} (x1);
    \path [->] (ocj+1) edge node {} (xn);
    
    \end{scope}
    \end{tikzpicture}
        \caption{Subgraph of $\G$ with chords when $Z(c_i) \la c_{i+1}$.}\label{Fig:subgraph_arc}
    \end{figure}        
\end{minipage}
    
\medskip

    This provides us with the same undirected cycle as displayed in the proof of \ref{prop:most_ci_in_Z}(ii), and therefore analogously we have a contradiction.

    Now, suppose that $Z(c_i) \la c_{i+1}$.
    As before, the undirected cycle is an active cycle, unless the appropriate chords are present.
    We can exclude several chords:
    \begin{itemize}
        \item  The trails $c_i \ra d^i_1 \ra \cdots \ra d^i_{\n{Z}{i}} \ra Z(c_i)$ and $t_1 \la \cdots \ra t_n$ do not contain chords by \ref{prop:no_chords}(ii).

        \item $\forall j = 0, \dots, n_{Z}(i)+1$, $\forall l= 1, \dots, m$, $d^i_j \ra t_l$ results in a cycle.

        \item $\forall j = 0, \dots, n_{Z}(i)$, $\forall l= m+1, \dots, n+1$, $d^i_j \ra t_l$ results in a trail with fewer converging connections.

        \item $\forall j = 1, \dots, n_{Z}(i)$, $\forall l= 1, \dots, n$, $t_l \ra d^i_j$ results in a trail with shorter descendant paths (because $d^i_j$ becomes the new converging connection instead of $c_i$).

        \item $\forall l= 1, \dots, n$, $t_l \ra c_i$ and $t_l \ra c_{i+1}$ results in a shorter trail whenever these are chords.

        \item $\forall l= 1, \dots, n$, $t_l \ra Z(c_i)$ result in a trail with fewer converging connections not in $Z$.

        \item $\forall l = m, \dots, n$, $Z(c_i) \ra t_l$ results in a cycle.

        \item $\forall l=m+1,\dots,n$, $c_i \ra t_l$ results in a cycle.
    \end{itemize}
    Therefore, the only possible chords are $c_{i+1} \ra d^i_j$ with $j \in \{1, \dots, \n{Z}{i} \}$, $c_{i+1} \ra t_l$ with $l \in \{1, \dots, m-1\}$ and $c_{i+1} \ra c_i$.

    We will now show that the arc $c_{i+1} \ra c_i$ must be present to prevent the occurrence of an active cycle.
    Consider the case where all possible chords are present except $c_{i+1} \ra c_i$, giving us the subgraph in \cref{Fig:subgraph_arc}. This subgraph contains an undirected cycle with one converging connection (at $d^i_1$).
    It is made up of the nodes $c_i$, $c_{i+1}$, $t_1$ and $d^i_1$; therefore it is of length 4.
    To prevent the occurrence of an active cycle it must have a chord.
    The only possible chord is the arc $c_{i+1} \ra c_i$, and hence this arc must be present.

    \underline{Case 2: $c_i \in Z$ and $c_{i+1} \notin Z$.}\\
    This case follows a by an analogous proof as the previous case.
    
    \underline{Case 3: $c_i, c_{i+1}\in Z$.}\\
    The nodes $c_i$ and $c_{i+1}$ are two nodes in $Y \sqcup Z$ joined by a trail
    $$
    c_i \la t_1 \har \cdots \har t_n \ra c_{i+1}
    $$
    with no converging connections and containing no nodes in $Y \sqcup Z$ by \ref{prop:nodes_along_subtrails}(i).
    Because $Y \sqcup Z$ has local relationships, we know that $c_i$ and $c_{i+1}$ are adjacent.

    Thus, for each case we have found that $c_i$ and $c_{i+1}$ must be adjacent, completing the proof of \ref{prop:adjacency}(iii) .

    \textbf{\ref{prop:subgraphs}(iv):}
    We want to show that for all $i=1,\dots,C$, $\G$ contains one of the considered two subgraphs. 
    
    \medskip
By \ref{prop:adjacency}(iii) we know that for all $i=1,\dots,C$, the nodes $c_i$ and $c_{i+1}$ are adjacent.
    Moreover, by \ref{prop:shortest_subtrails}(v), the trails
    $$
    c_i \la t^i_1 \har \cdots \har t^i_n \har c_{i+1},
    $$
    with $t^i_n \ra c_{i+1}$ if $i<C$, are shortest such trails consisting of nodes in $\V \setminus Z$.
    Therefore, we can apply \cref{lemma:generalisation_to_K,lemma:po_active_cycle1} to find that $\G$ contains one of the two subgraphs.
\end{proof}

In many simple cases, we can show that the converging nodes belong to $Z$, meaning that there are no descendant paths.
Below, two special cases where all the arrows point in the same direction are presented. In both of these cases it is shown that all the converging nodes $c_i$ are in $Z$.
Another simple case in the following corollary is when a converging node does not have a converging connection with the other converging nodes.

\begin{corollary}
\label{cor:all_cs_in_Z}
    Let us consider the setting of \cref{thm:minimal_trails_local_relationship}.
    \begin{itemize}
        \item[(i)] If the trail
        $c_1 \har \cdots \har  c_C$
        takes the form
        $c_1 \ra \cdots \ra  c_C$,
        then $\forall i=1,\dots,C$, $c_i \in Z$.
        \label{prop:rightarroww_all_in_Z}
        
        \item[(ii)] If $c_1 \in Z$ and the trail
        $c_1 \har \cdots \har c_C$
        takes the form
        $c_1 \la \cdots \la  c_C$,
        then $\forall i=1,\dots,C$, $c_i \in Z$.
        \label{prop:leftarrow_all_in_Z}
        
        \item[(iii)] Let $i \in \{2, \dots, C-1\}$.
        If the trail $c_{i-1} \har c_i \har c_{i+1}$ is not a converging connection, then $c_i \in Z$.
        \label{prop:no_convCon_c_i_in_Z}
    \end{itemize}
\end{corollary}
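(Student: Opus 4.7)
The plan is to treat each of the three items as a short consequence of \cref{thm:minimal_trails,thm:minimal_trails_local_relationship}, propagating membership in $Z$ along the chain $c_1, \dots, c_C$ using the arc-direction rules from \cref{thm:minimal_trails} (properties (iii) and (iv)), together with the anchor facts that $c_C \in Z$ (property (i) of \cref{thm:minimal_trails_local_relationship}) and that every adjacent pair $c_i, c_{i+1}$ has at least one element in $Z$ (property (ii) of \cref{thm:minimal_trails_local_relationship}). No new geometric argument is needed; only the already-established arc hypotheses and $Z$-propagation are used.

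For part (i), I would prove $c_i \in Z$ by \emph{downward} induction on $i$, starting from $i = C$. The base case is \cref{thm:minimal_trails_local_relationship}(i), which gives $c_C \in Z$. For the inductive step, assuming $c_{i+1} \in Z$ and using the hypothesis $c_i \rightarrow c_{i+1}$, \cref{thm:minimal_trails}(iii) yields $c_i \in Z$. Part (ii) is symmetric: \emph{upward} induction on $i$, with base case given directly by the assumption $c_1 \in Z$ and the inductive step supplied by \cref{thm:minimal_trails}(iv) applied to $c_i \leftarrow c_{i+1}$.

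For part (iii), I would split the non-converging possibilities at $c_i$ into three cases: the two serial orientations $c_{i-1} \rightarrow c_i \rightarrow c_{i+1}$ and $c_{i-1} \leftarrow c_i \leftarrow c_{i+1}$, and the diverging orientation $c_{i-1} \leftarrow c_i \rightarrow c_{i+1}$. In each case, \cref{thm:minimal_trails_local_relationship}(ii) applied to a suitable adjacent pair forces at least one neighbouring converging node to lie in $Z$, and then the appropriate arc-direction rule (\cref{thm:minimal_trails}(iii) or (iv)) pushes membership onto $c_i$. For instance, in the serial case $c_{i-1} \rightarrow c_i \rightarrow c_{i+1}$: by \cref{thm:minimal_trails_local_relationship}(ii) applied to the pair $(c_i, c_{i+1})$, either $c_i \in Z$ (and we are done) or $c_{i+1} \in Z$, in which case \cref{thm:minimal_trails}(iii) applied to the arc $c_i \rightarrow c_{i+1}$ yields $c_i \in Z$. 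The diverging case handles both directions: pick the neighbour of $c_i$ guaranteed by \cref{thm:minimal_trails_local_relationship}(ii) to lie in $Z$, and use \cref{thm:minimal_trails}(iii) if it is $c_{i+1}$ or \cref{thm:minimal_trails}(iv) if it is $c_{i-1}$.

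There is no genuine obstacle here; the mild point to be careful about is that the arc-direction rules of \cref{thm:minimal_trails}(iii)--(iv) presuppose the existence of the direct arc between consecutive converging nodes, which is exactly what is granted by \cref{thm:minimal_trails_local_relationship}(iii) together with the hypothesised orientations in (i), (ii), and (iii). Once the right arc exists and the right endpoint is known to lie in $Z$, each implication is a one-line invocation.
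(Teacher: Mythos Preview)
Your proposal is correct and matches the paper's proof in substance. The only cosmetic difference is in part~(iii): you split the non-converging configuration at $c_i$ into three cases (two serial, one diverging), whereas the paper observes more compactly that ``not converging'' is equivalent to ``$c_{i-1}\leftarrow c_i$ or $c_i\rightarrow c_{i+1}$'' and handles those two cases directly---but this is the same argument, just organised differently.
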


\begin{proof}
    The first part of this corollary is obtained by combining
    \cref{prop:final_c_in_Z}(i) and \cref{prop:rightarrow_in_Z}(iii).
    The second part of this corollary is a consequence of \cref{prop:leftarrow_in_Z}(iv).
    For the third part, combining Theorems~\ref{prop:most_ci_in_Z}(ii), \ref{prop:adjacency}(iii), \ref{prop:rightarrow_in_Z}(iii) and \ref{prop:leftarrow_in_Z}(iv) shows that $c_i \in Z$
    for the two cases
    $c_{i-1} \la c_i$ and $c_i \ra c_{i+1}$.
\end{proof}

\begin{remark}
    The condition in \cref{prop:no_convCon_c_i_in_Z} that $c_i$ is not a converging connection cannot be removed.
    Indeed, we now present a counter-example in which this condition is not satisfied.
    Consider the graph in \cref{fig:necessity_no_converging_c_i}, and let $X=\{x\}$, $Y=\{y\}$,
    and $Z=\{c_1, d_1, c_3\}$.
    Note that the trail $x \ra c_1 \la t_1 \ra c_2 \la t_2 \ra c_3 \la y$ is the minimal trail in $\TRAILSbig{X}{Y}{Z}$ since this is the only trail between $X$ and $Y$ activated by $Z$. Furthermore, it can be easily checked that $Y \sqcup Z$ has local relationships.
    Therefore, we are in the setting of \cref{thm:minimal_trails_local_relationship}, but still $c_2 \notin Z$.
    \begin{figure}[H]
        \centering
        \begin{tikzpicture}[scale=0.8, transform shape, node distance=1cm, state/.style={circle, draw=black}]

        \node [state] (x) {$x$};
        \node[state, right = of x] (c1) {$c_1$};
        \node[state, above = of c1] (t1) {$t_1$};
        \node[state, right = of c1] (c2) {$c_2$};
        \node[state, right = of c2] (c3) {$c_3$};
        \node[state, above = of c3] (t2) {$t_2$};
        \node[state, below = of c2] (d2) {$d_1$};
        \node[state, right = of c3] (y) {$y$};
        
        \begin{scope}[>={Stealth[length=4pt,width=3pt,inset=0pt]}]
        \path [->] (x) edge node {} (c1);
        \path [->] (c1) edge node {} (c2);
        \path [->] (c3) edge node {} (c2);
        \path [->] (y) edge node {} (c3);
        
        \path [->] (t1) edge node {} (c1);
        \path [->] (t1) edge node {} (c2);
        
        \path [->] (t2) edge node {} (c2);
        \path [->] (t2) edge node {} (c3);
        
        \path [->] (c1) edge node {} (d2);
        \path [->] (c2) edge node {} (d2);
        \path [->] (c3) edge node {} (d2);
        \end{scope}
        
        \end{tikzpicture}
        \caption{Graph illustrating the necessity of assumption in \cref{prop:no_convCon_c_i_in_Z}. }
    \label{fig:necessity_no_converging_c_i}
    \end{figure}
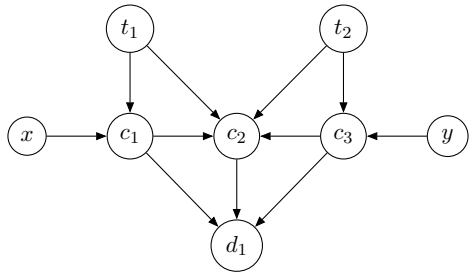

\end{remark}

\section{Conclusion}
The trails that we considered in this paper were composed of distinct nodes as it is known that the existence of an active trail (with non-distinct elements) between two nodes in a DAG is equivalent with the existence of  an active trail (with distinct elements) between these nodes \cite{Geiger_et_al_1990}.

Our motivation to study properties of trails under specific conditions considered in this paper is  the application of these results in copula based Bayesian Networks.
However these results could also be of interest when searching for conditional independence that can be deduced from a given DAG.

\bibliographystyle{abbrv}
\bibliography{main}

\end{document}